\theoremstyle{plain}
\newtheorem{te}{Theorem}[section]
\newtheorem{lem}[te]{Lemma}
\newtheorem{co}[te]{Corollary}
\newtheorem{de}[te]{Definition}
\newtheorem{ex}[te]{Example}
\newtheorem{qu}[te]{Question}
\newtheorem{con}[te]{Conjecture}
\theoremstyle{remark}
\newtheorem{re}[te]{Remark}
\newtheorem*{ack*}{Acknowledgment}
\def\x{{\textbf{x}}}
\def\n{{\bf n}}
\def\0{{\bf 0}}
\def\T{{\mathbb T}}
\def\R{{\mathbb R}}
\def\C{{\mathbb C}}
\def\S{{\mathbb S}}
\def\Z{{\mathbb Z}}
\def\P{{\mathbb P}}
\def\H{{\mathbb H}}
\def\supp{{\operatorname{supp}\,}}
\def\nint{\mathop{\diagup\kern-13.0pt\int}}
\def\dist{{\operatorname{dist}\,}}
\def\Ec{{\mathcal E}}
\def\Ac{{\mathcal A}}\def\Qc{{\mathcal Q}}
\def\Pc{{\mathcal P}}
\begin{document}

\title{Beyond canonical decoupling}

\author{Ciprian Demeter}
\address{Department of Mathematics, Indiana University, 831 East 3rd St., Bloomington IN 47405}
\email{demeterc@indiana.edu}

\keywords{exponential sums, decoupling, restriction conjecture}
\thanks{The author is  partially supported by the NSF grant DMS-2055156}

\begin{abstract}
	We introduce two families of inequalities. Large ensemble decoupling is connected to the continuous restriction phenomenon. Tight decoupling is connected to the discrete Restriction conjecture for the sphere.
	Our investigation opens new grounds and answers some questions. 	
	
\end{abstract}

\maketitle

\section{The large ensemble problem}

Given $S\subset [0,1]^2$, $F:S\to\C$ and $x\in \R^3$, the restriction to the two dimensional  paraboloid $\P^2$ is
$$E_SF(x)=\int_SF(\xi)e((\xi,|\xi|^2)\cdot x)d\xi.$$
We write $EF$ for $E_{[0,1]^2}F$, and we denote by $B_R$ an arbitrary ball/cube of radius/side length $R$ in $\R^3$. For context, we recall the main result in \cite{BD}.

Let $\Qc_{R}$ denote the  $\frac1{\sqrt{R}}$-squares $q$ in $[0,1]^2$ of the form $[\frac{j}{\sqrt{R}},\frac{j+1}{\sqrt{R}}]\times [\frac{i}{\sqrt{R}},\frac{i+1}{\sqrt{R}}]$.
\begin{te}
\label{candec}		
	 For $2\le p\le 4$ we have  $$\|EF\|_{L^p(B_R)}\lesssim_{\epsilon}R^\epsilon (\sum_{q\in\Qc_R}\|E_{q}F\|_{L^p(B_R)}^2)^{1/2}.$$	
\end{te}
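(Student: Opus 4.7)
The plan is to treat the endpoints $p=2$ and $p=4$ separately, then interpolate. The case $p=2$ is immediate from Plancherel: after multiplying $F$ by a Schwartz bump adapted to a slight enlargement of $B_R$, the Fourier support of each $E_q F$ becomes an $O(R^{-1})$-thickening of the paraboloid patch above $q$; these thickened patches are essentially pairwise disjoint, so $\|EF\|_{L^2(B_R)}^2 \lesssim \sum_{q\in\Qc_R}\|E_q F\|_{L^2(B_R)}^2$ with no $R^\epsilon$ loss. Once the critical estimate at $p=4$ is in hand, complex interpolation on the vector-valued $L^p(\ell^2)$ scale delivers the full range $2 \le p \le 4$.

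The bulk of the work is at the critical exponent $p=4$. A naive C\'ordoba--Fefferman approach, writing $\|EF\|_{L^4}^4 = \||EF|^2\|_{L^2}^2$ and hoping for near-orthogonality of the products $E_{q_1}F \cdot \overline{E_{q_2}F}$, fails for $\P^2$: the difference map $(\xi_1,\xi_2) \mapsto (\xi_1-\xi_2, |\xi_1|^2-|\xi_2|^2)$ has one-parameter fibers, producing a catastrophic $R^{1/2}$ loss in the overlap multiplicity of the products' Fourier supports. To overcome this I would pass to a bilinear formulation. Cover $[0,1]^2$ by caps $\tau$ of radius $K^{-1}$ for a large parameter $K$, split $EF = \sum_\tau E_\tau F$, and estimate separately the diagonal contributions (in which many factors share a single $\tau$) and the off-diagonal pairs $\{\tau_1,\tau_2\}$ with $\dist(\tau_1,\tau_2) \gtrsim K^{-1}$. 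The diagonal terms are absorbed by \emph{parabolic rescaling}: a single $K^{-1}$-cap at scale $R$ becomes the full $[0,1]^2$ at scale $R/K^2$, feeding directly into an induction on scales. The off-diagonal terms are controlled by a bilinear extension inequality, exploiting the quantitative transversality of the normals to $\P^2$ over $\tau_1, \tau_2$ via the Bennett--Carbery--Tao multilinear Kakeya/restriction theorem.

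Combining these pieces yields an inductive relation roughly of the form $\Dec_4(R) \le C_\epsilon K^{O(\epsilon)}\,\Dec_4(R/K^2) + (\text{bilinear terms})$, with the bilinear contribution summable via multilinear Kakeya. Iterating from the trivial base case at $R \sim 1$ and optimizing $K = K(\epsilon)$ then gives the $R^\epsilon$ bound. The main obstacle is closing this induction cleanly: one needs (i) a sharp bilinear-to-linear reduction preserving the $\ell^2$ aggregation over $\Qc_R$ on the right-hand side -- this $\ell^2$ structure, as opposed to $\ell^p$, is what makes the statement sharp and is the central insight of Bourgain--Demeter; (ii) $\epsilon$-management so that $K$ can be taken large enough to beat the bilinear constant while $K^{O(\epsilon)}$ stays controlled over the $\sim \log_K R$ iterations; and (iii) accounting for the parabolic rescaling losses in both scale and the $\ell^2$ norm. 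This closure is the technical heart of the Bourgain--Demeter argument.
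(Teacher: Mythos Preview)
The paper does not prove Theorem~\ref{candec}; it is quoted from \cite{BD} as background (``For context, we recall the main result in \cite{BD}''), and all of the paper's new work takes this decoupling inequality as a black box. So there is no ``paper's own proof'' to compare against.

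As for your sketch itself, it is a reasonable high-level caricature of the Bourgain--Demeter strategy, but two points deserve correction. First, for $\P^2\subset\R^3$ the relevant multilinear input is \emph{trilinear}, not bilinear: the Bourgain--Guth decomposition at scale $K^{-1}$ produces a narrow contribution (caps clustered near a line, handled by lower-dimensional decoupling and rescaling) and a broad contribution controlled by the \emph{trilinear} Bennett--Carbery--Tao estimate; bilinear restriction alone (Tao's theorem) is not sharp enough to close the $\ell^2$ induction at $p=4$. Second, the step ``complex interpolation on the vector-valued $L^p(\ell^2)$ scale'' is not quite right as stated: decoupling inequalities are not linear operator bounds, so standard complex interpolation does not apply directly. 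There is an interpolation lemma for decoupling constants (see e.g.\ \cite{Dbook}), but it requires a separate argument. More substantively, the actual closing of the induction in \cite{BD} is considerably more delicate than your recursion $\Dec_4(R)\le C_\epsilon K^{O(\epsilon)}\Dec_4(R/K^2)+\text{(multilinear)}$ suggests: one needs ball inflation, a hierarchy of intermediate scales, and a careful comparison of linear and multilinear decoupling constants. Your outline captures the ingredients but not the mechanism by which they are combined.
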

 To be fully precise, $L^p(B_R)$ on the right hand side should be $L^p(w_{B_R})$, where $w_{B_R}$ is a smooth approximation of $1_{B_R}$. To keep notation simple, this will be ignored, as it has no impact on our arguments.
This inequality is known as (canonical) $l^2$ decoupling. It immediately implies the superficially weaker inequality called (canonical) $l^p$ decoupling
$$\|EF\|_{L^p(B_R)}\lesssim_{\epsilon}R^{\frac12-\frac1p+\epsilon} (\sum_{q\in\Qc_R}\|E_{q}F\|_{L^p(B_R)}^p)^{1/p}.$$
There are similar decouplings for other manifolds, but we only focus on $\P^2$ here. The phenomenon we are about to describe is already complex in this case.

The scale $1/\sqrt{R}$ of the squares $q$ has a special relation to the scale $R$ of the balls $B_R$. It allows the crucial use of parabolic rescaling in the proof of Theorem \ref{candec}. For this reason, the squares $q$ (or their lifts to $\P^2$) are sometimes called {\em canonical} caps. One notable variant that has been considered is called small cap decoupling. This time $[0,1]^2$ is partitioned into smaller $\frac1{R^\alpha}$-squares, $\frac12< \alpha\le 1$, called {\em small caps}. See \cite{BDG} and \cite{GMO}.

Here, we go in the opposite direction and replace the squares $q$ with larger sets. Considering just larger squares does not lead to anything new or interesting. Instead, we will allow arbitrary unions of such squares. Examples include thin rectangles and disconnected sets.

\begin{de}
A disjoint union of $\frac1{\sqrt{R}}$-squares $q\in\Qc_R$  will be called a large ensemble.
\end{de}We kick-start our investigation with the following conjecture.

\begin{con}[Large ensemble decoupling for the paraboloid]
\label{c1}	
Let $N=O(R)$.	
Let $S_1,\ldots,S_N$ be a partition of $[0,1]^2$ into large ensembles. Then for $2\le p\le 3$ we have
\begin{equation}
\label{eltwolargeen}
\|EF\|_{L^p(B_R)}\lesssim_{\epsilon}R^\epsilon (\sum_{i=1}^N\|E_{S_i}F\|_{L^p(B_R)}^2)^{1/2}.
\end{equation}	
\end{con}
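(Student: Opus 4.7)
The plan is a two-step reduction. First, apply canonical $l^2$-decoupling (Theorem \ref{candec}), valid for $2\le p\le 4$:
$$\|EF\|_{L^p(B_R)}\lesssim_\epsilon R^\epsilon\Big(\sum_{q\in\Qc_R}\|E_q F\|_{L^p(B_R)}^2\Big)^{1/2}.$$
Second, aim for an ensemble-wise \emph{reverse decoupling}
\begin{equation}
\label{rev}
\sum_{q\subset S_i}\|E_q F\|_{L^p(B_R)}^2\lesssim_\epsilon R^\epsilon \|E_{S_i}F\|_{L^p(B_R)}^2
\end{equation}
for every $i$; summing \eqref{rev} in $i$ yields Conjecture \ref{c1}. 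At $p=2$, \eqref{rev} is an instance of Plancherel with a smooth partition of unity on the squares, so by interpolation the only substantive case is the endpoint $p=3$.

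For \eqref{rev} at $p=3$ my first attempt would be a wave packet analysis on $B_R$. Each $E_qF$ is concentrated on $\sqrt R\times\sqrt R\times R$-tubes oriented along $(-2\xi_q,1)$, where $\xi_q$ is the center of $q$. Tubes from distinct $q\subset S_i$ point in different directions and overlap only in controlled regions of $B_R$; a bilinear-to-linear reduction in the Tao--Vargas--Vega style should convert this geometric transversality into the required $R^\epsilon$-gain. The hypothesis $N=O(R)$ is expected to enter as a counting bound that precludes pathological configurations, so that a Cauchy--Schwarz over the ensemble index is essentially lossless.

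The main obstacle is that ensembles have no prescribed geometry: a single $S_i$ may contain canonical squares scattered throughout $[0,1]^2$, so $E_{S_i}F$ has no dominant normal direction and the usual decoupling tools (parabolic rescaling, induction over a coherent cap) do not apply directly. Moreover, \eqref{rev} at $p=3$ is of restriction-type strength---consistent with the connection between large ensemble decoupling and the continuous restriction phenomenon flagged in the abstract---so the endpoint is essentially as hard as a refined $L^3$ restriction estimate for $\P^2$, an open problem. I would therefore first establish the conjecture for structured ensembles where rescaling is available (thin rectangles via cylindrical decoupling for the slice of $\P^2$, lattices of squares via a multi-scale decoupling), before attempting the general case.
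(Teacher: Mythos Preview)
First, note that the statement is a \emph{conjecture}: the paper does not prove it unconditionally. What it provides is Theorem~\ref{condit}, a conditional derivation assuming the reverse square function estimate \eqref{eeeeeeee1}, together with Theorem~\ref{warning} showing that the $p=3$ case already implies the full Restriction Conjecture for $\P^2$.

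Your two-step reduction has a genuine gap: the per-ensemble ``reverse decoupling''
\[
\sum_{q\subset S_i}\|E_qF\|_{L^p(B_R)}^2\lesssim_\epsilon R^\epsilon\|E_{S_i}F\|_{L^p(B_R)}^2
\]
is \emph{false} for $p>2$, by a polynomial factor. Let $S_i$ consist of $M$ squares and choose $F$ so that each $E_qF$ is a single wave packet of amplitude $\sim 1$ on a tube $T_q$, with the $T_q$ pairwise disjoint in $B_R$ (achievable for $M\lesssim R$ by modulating $F$ on each $q$). Then $\|E_qF\|_{L^3}\sim R^{2/3}$ and the left side is $\sim MR^{4/3}$. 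For random signs on the $q$'s, Khintchine gives $\|E_{S_i}F\|_{L^3}^3\sim\int(\sum_q 1_{T_q})^{3/2}\sim MR^2$, so the right side is $\sim M^{2/3}R^{4/3}$; the inequality fails by $M^{1/3}$. Only $l^p$ recoupling is available for $p>2$ (cf.\ the Recoupling theorem in Section~\ref{sec3}); the $l^2$ version holds only at $p=2$, and there is nothing above $p=2$ to interpolate against.

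The paper's conditional argument (Theorem~\ref{condit}) avoids this trap precisely by \emph{not} passing through canonical decoupling. Once the $L^p$ norm sits inside the $l^2$ sum over $q$, regrouping by ensemble is impossible without loss. Instead, the paper starts from the pointwise bound $\|EF\|_{L^p}\lesssim\|(\sum_q|E_qF|^2)^{1/2}\|_{L^p}$, applies Minkowski in $l^2(L^p)$ to split the square function by ensemble, and only then invokes Rubio de Francia (valid for all $p\ge 2$) to recombine: $\|(\sum_{q\subset S_i}|E_qF|^2)^{1/2}\|_{L^p}\lesssim\|E_{S_i}F\|_{L^p}$. The order matters: keep the square function inside $L^p$ until after the ensemble grouping. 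Your instinct that the $p=3$ endpoint is restriction-strength is correct and confirmed by Theorem~\ref{warning}; but the specific target you isolated is not merely hard---it is false.
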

The case $p=2$ follows from $L^2$ orthogonality. As we will see in the next section, the case $p=3$ implies the full Restriction conjecture for $\P^2$. The same computations will show why the restriction $p\le 3$ is needed. No interpolation is available in this general setting.

A well known consequence of Theorem \ref{candec} is the fact that for each $2\le p\le 4$ and each exponential sum $\Ec(x)=\sum_{q}a_qe(\xi_q\cdot x)$ associated with $1/\sqrt{R}$-separated points $\xi_q$ on $\P^2$ we have
$$\|\Ec\|_{L^p(B_R)}\approx \|a_q\|_{l^2}|B_R|^{1/p}.$$	
Here and in the following we use the notation $\approx$ to denote an equality modulo $R^\epsilon$ multiplicative losses, for arbitrarily small $\epsilon>0$. This immediately shows that when $EF$ is such an exponential sum,  \eqref{eltwolargeen} holds with $\lesssim$ replaced with $\approx$, in the larger range $2\le p \le 4$, and for each $S_i$.
\smallskip

The reason we anticipate the validity of the conjecture for arbitrary $F$ is because it is implied by the following reverse square function estimate, widely believed to be true for $2\le p\le 3$
\begin{equation}
\label{eeeeeeee1}
\|EF\|_{L^p(B_R)}\lesssim_\epsilon\|(\sum_{q\in\Qc_R}|E_qF|^2)^{1/2})\|_{L^p(B_R)}.
\end{equation}

\begin{te}
	\label{condit}
Assume  \eqref{eeeeeeee1} is true. Then Conjecture \ref{c1} holds.
\end{te}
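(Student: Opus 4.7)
I would combine the hypothesis \eqref{eeeeeeee1} with Minkowski's inequality, and then reduce the theorem to a forward square function estimate for each $E_{S_i}F$ at the canonical scale.

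First, apply \eqref{eeeeeeee1} to $EF$ and square the result:
$$\|EF\|_{L^p(B_R)}^2 \lesssim_\epsilon \|\sum_{q\in\Qc_R}|E_qF|^2\|_{L^{p/2}(B_R)}.$$
Since each $S_i$ is a disjoint union of $\tfrac{1}{\sqrt{R}}$-squares, the sub-families $\Qc_R(i):=\{q\in\Qc_R:q\subset S_i\}$ partition $\Qc_R$. Split the sum accordingly and apply the triangle inequality in $L^{p/2}(B_R)$, which is available because $p\ge 2$:
$$\|\sum_q |E_qF|^2\|_{L^{p/2}(B_R)} \;\le\; \sum_{i=1}^N \|(\sum_{q\in\Qc_R(i)}|E_qF|^2)^{1/2}\|_{L^p(B_R)}^2.$$
Combining,
$$\|EF\|_{L^p(B_R)}^2 \;\lesssim_\epsilon\; \sum_{i=1}^N \|(\sum_{q\in\Qc_R(i)}|E_qF|^2)^{1/2}\|_{L^p(B_R)}^2.$$

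It then suffices to show, for each $i$, the forward square function bound
$$\|(\sum_{q\in\Qc_R(i)}|E_qF|^2)^{1/2}\|_{L^p(B_R)} \;\lesssim\; R^\epsilon \,\|E_{S_i}F\|_{L^p(B_R)};$$
plugging this in and taking square roots yields \eqref{eltwolargeen}. At $p=2$ this forward inequality holds up to $R^\epsilon$ by Plancherel, since the $E_qF$ are nearly orthogonal on $B_R$: the caps $q$ are $\tfrac{1}{\sqrt R}$-separated in frequency, while $B_R$ has spatial scale $R$, so smooth localization loses only $R^\epsilon$.

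The heart of the argument, and the main obstacle, is this forward inequality for $p>2$; it does not follow from \eqref{eeeeeeee1} in a direct fashion, because applying \eqref{eeeeeeee1} to $E_{S_i}F$ yields the opposite direction. My intended strategy is a Khintchine-type randomization over signs $(\epsilon_q)_{q\in\Qc_R(i)}\in\{\pm 1\}$: pointwise $(\sum_{q\in\Qc_R(i)}|E_qF|^2)^{p/2}\approx \E_\epsilon\,|E(F\cdot 1_{S_i}\cdot \sum_{q}\epsilon_q 1_q)|^p$, and each sign-modulated extension has the same frequency footprint as $E_{S_i}F$. One then needs to replace the averaged sign-modulated $L^p$ norm by $\|E_{S_i}F\|_{L^p(B_R)}$ up to $R^\epsilon$. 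I anticipate this substitution is the delicate step --- most naturally carried out through a wave packet decomposition tied to the paraboloid, or by interpolation between the trivial $p=2$ case and an endpoint at $p=3$ --- and that it is precisely what forces the restriction $p\le 3$ in Conjecture \ref{c1}.
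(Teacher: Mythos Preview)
Your reduction via \eqref{eeeeeeee1} and Minkowski's inequality in $L^{p/2}$ is exactly the paper's first step, so the proof hinges, as you correctly isolate, on the forward square function bound
\[
\Big\|\Big(\sum_{q\subset S_i}|E_qF|^2\Big)^{1/2}\Big\|_{L^p(B_R)}\ \lesssim\ \|E_{S_i}F\|_{L^p(B_R)}.
\]
Here there is a genuine gap in your argument. This inequality is precisely Rubio de Francia's square function estimate, valid for \emph{all} $p\ge 2$ and for any collection of functions whose Fourier transforms are supported in pairwise disjoint axis-parallel boxes. The paper applies it directly: after multiplying by a smooth cutoff $\eta_R$ (so that $\eta_RE_qF$ has Fourier support in a genuine $1/\sqrt{R}$-cube), one gets
\[
\Big\|\Big(\sum_{q\subset S_i}|\eta_RE_qF|^2\Big)^{1/2}\Big\|_{L^p(\R^3)}\ \lesssim\ \Big\|\sum_{q\subset S_i}\eta_RE_qF\Big\|_{L^p(\R^3)}\ \approx\ \|E_{S_i}F\|_{L^p(B_R)},
\]
and the proof is complete in one line.

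Your Khintchine randomization is in fact the first move in one standard proof of Rubio de Francia's inequality, but by itself it does not close: having the same frequency footprint does not let you replace $\E_\epsilon\|\sum_q\epsilon_qE_qF\|_{L^p}^p$ by $\|\sum_qE_qF\|_{L^p}^p$. The proposed fixes (wave packets, interpolation to $p=3$) are off target --- no property of the paraboloid is needed here, only the disjointness of the frequency cubes. In particular, your diagnosis that this step forces $p\le 3$ is incorrect: the forward bound holds for every $p\ge 2$, and the restriction $p\le 3$ in Conjecture~\ref{c1} comes entirely from the hypothesis \eqref{eeeeeeee1}.
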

\begin{proof}
We first combine \eqref{eeeeeeee1} with Minkowski's inequality to get
$$\|EF\|_{L^p(B_R)}\lesssim (\sum_{i=1}^N\|(\sum_{q\in S_i }|E_{q}F|^2)^{1/2}\|^2_{L^p(B_R)})^{1/2}.$$
Note that $$\|(\sum_{q\in S_i }|E_{q}F|^2)^{1/2}\|_{L^p(B_R)}\approx \|(\sum_{q\in S_i }|\eta_RE_{q}F|^2)^{1/2}\|_{L^p(\R^3)},$$
where $\eta_R$ is a smooth approximation of $1_{B_R}$, with compactly supported Fourier transform. The Fourier transforms of the functions $\eta_RE_qF$ are supported on a finitely overlapping family of $1/\sqrt{R}$-cubes near $\P^2$.

To end, we use Rubio de Francia's square function estimate for this collection of cubes
$$\|(\sum_{q\in S_i }|\eta_RE_{q}F|^2)^{1/2}\|_{L^p(\R^3)}\lesssim \|\sum_{q\in S_i }\eta_RE_{q}F\|_{L^p(\R^3)}\approx \|E_{S_i}F\|_{L^p(B_R)}.$$
\end{proof}

In Section \ref{sec2} we prove that Conjecture \ref{c1} implies the Restriction conjecture for $\P^2$. Since proving it for arbitrary partitions is out of reach,  in Section \ref{sec3} we introduce methods aimed to solve a particular case of interest.

\begin{ack*}
	I am grateful to Larry Guth for providing me with the probabilistic heuristics for Corollary \ref{jkgimgiigf[i[pogi]]}. I thank Nets Katz for suggesting the name for large ensemble decoupling. The author has enjoyed conversations on this topic with Hong Wang and Aric Wheeler. 	
\end{ack*}
\section{Conjecture \ref{c1} implies the Restriction Conjecture}
\label{sec2}

Here is the main result in this section. A union of $M$ canonical squares $q$ is called an $M$-set.
\begin{te}
\label{trelation}	
Fix  $0<\gamma\le 1$.	
Assume that the following large ensemble $l^p$ decoupling holds for some $p\in [2,4]$, $M\sim R^{1-\gamma}$ (and all $\epsilon>0$, all $R\gtrsim 1$)
\begin{equation}
\label{sdcyre8we t}
\|EF\|_{L^p(B_R)}^p\lesssim_\epsilon R^\epsilon (\frac{R}{M})^{\frac{p}{2}-1}\sum_{j=1}^{R/M}\|E_{P_j}F\|_{L^p(B_R)}^p,\end{equation}
for all $F$ and all partitions $\Pc$ of $\Qc_R$ into $M$-sets $P_j$.
Then for each $\|F\|_\infty\lesssim R$ we have
$$\int_{B_{\sqrt{R}}}|EF|^p\lesssim_\epsilon R^\epsilon(R^{\frac{3+p}{2}}+R^3(\frac{R}{M})^{\frac{p}{2}-1}).$$
\end{te}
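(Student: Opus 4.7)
The plan is to produce the two terms of the bound from two distinct decoupling inputs, unified by a single Fourier-localization device.

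Introduce the cutoff $f:=(EF)\eta_{\sqrt R}$, where $\eta_{\sqrt R}$ is a Schwartz bump adapted to $B_{\sqrt R}$ whose Fourier transform is supported in a ball of radius $O(R^{-1/2})$. Since $\widehat{EF}$ lives on $\P^2$, the function $\hat f$ is contained in the $R^{-1/2}$-neighborhood of $\P^2$, which is the natural Fourier setting to which decouplings at scale $R$ apply (by the standard reformulation in terms of Fourier support). Moreover $\|f\|_{L^p(\R^3)}^p\approx \int_{B_{\sqrt R}}|EF|^p$ up to rapidly-decaying tails. This device will let me transfer both the hypothesized and the known decoupling from $B_R$ to a statement tailored to $B_{\sqrt R}$.

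\textbf{First term $R^{(3+p)/2}$.} I would apply the canonical $\ell^2$ decoupling of Theorem~\ref{candec} to $f$ with the partition into individual canonical caps $q\in\Qc_R$. On each cap, I would use the pointwise bound $\|E_qF\|_\infty\le\|F\|_\infty|q|\lesssim 1$, together with the Plancherel identity in the horizontal variables, which gives $\|E_qF\|_{L^2(B_{\sqrt R})}^2\lesssim R^{1/2}\|F\chi_q\|_2^2\lesssim R^{3/2}$. Interpolating yields $\|E_qF\|_{L^p(B_{\sqrt R})}^p\lesssim R^{3/2}$. Plugging into the $\ell^p$ form of canonical decoupling (whose Cauchy--Schwarz loss across the $R$ caps contributes $R^{1/2-1/p}$) and exponentiating produces the clean bound $\int_{B_{\sqrt R}}|EF|^p\lesssim_\epsilon R^\epsilon R^{(3+p)/2}$.

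\textbf{Second term $R^3(R/M)^{p/2-1}$.} For this I would invoke the hypothesized large ensemble $\ell^p$ decoupling \eqref{sdcyre8we t} applied to $f$ with the partition $\Pc$. On each $M$-set piece, the analogous elementary estimates $\|E_{P_j}F\|_\infty\lesssim M$ and $\|E_{P_j}F\|_{L^2(B_{\sqrt R})}^2\lesssim R^{3/2}M$ are available. Bounds on these pieces, together with the decoupling constant $(R/M)^{p/2-1}$ and the summation over the $R/M$ terms, should produce the second term. Adding the two contributions yields the stated inequality.

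\textbf{Main obstacle.} The delicate part lies in extracting the precise form $R^3(R/M)^{p/2-1}$ in the second step: the naive $L^\infty$--$L^2$ interpolation on each piece, followed by summation and multiplication by $(R/M)^{p/2-1}$, produces an expression whose $M$-dependence does not immediately match the target when $M$ is large. A sharpening of the per-piece estimate appears necessary, for instance via a pigeonholing on the magnitude of $|F|$, a judicious choice of partition (so that the $\|E_{P_j}F\|_\infty$ are better balanced than the worst-case $M$ across $j$), or an interpolation against the Stein--Tomas-type $L^4$ bound on each $E_{P_j}F$. The cutoff trick with $\eta_{\sqrt R}$ handles the passage from $B_R$ to $B_{\sqrt R}$ cleanly; the principal technical content is arranging the powers of $M$ so that they multiply correctly against the decoupling constant.
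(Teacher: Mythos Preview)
Your approach misses the central idea of the paper's proof and, as a result, the gap you flag in the ``Main obstacle'' paragraph is not a technicality but the whole point.

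The paper does not apply the hypothesis \eqref{sdcyre8we t} to a single partition. It \emph{averages} \eqref{sdcyre8we t} over all partitions of $\Qc_R$ into $M$-sets, and then controls the resulting average $\frac{1}{\#\Pc}\sum_{\Pc}\sum_{P_j\in\Pc}|\sum_{i\in P_j}a_i(x)|^p$ pointwise by a combinatorial lemma (Corollary~\ref{elp}, proved via exact $\ell^2$ and $\ell^4$ expansions and interpolation). With $a_i(x)=E_{q_i}F(x)$ reduced to a single wave packet per cap, this average is dominated by $L(x)^{p/2}+(R/M)^{p/2-1}L(x)+\frac{M}{R}S(x)^p$, where $L=\sum_i 1_{T_i}$ is the tube overlap function and $S=|EF|$. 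Integrating, the $S$-term is absorbed into the left side (since $M\ll R$), the $L$-term gives $R^3(R/M)^{p/2-1}$ trivially, and the $L^{p/2}$-term gives $R^{(3+p)/2}$ via the bush estimate. Neither of the two target terms comes from canonical decoupling; both arise from this averaging-then-Kakeya mechanism.

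Your suggested fixes (pigeonholing on $|F|$, choosing a balanced partition, Stein--Tomas on pieces) cannot repair the second term: for a fixed partition, the worst case $\|E_{P_j}F\|_\infty\sim M$ is realized simultaneously for all $j$ on the bush core, so no rebalancing helps. The paper's averaging exploits that a \emph{random} $M$-set sees only $\sim (M/R)S$ of the full sum at a typical point, which is exactly the square-root gain you are missing. Separately, your first-term argument is also off: the cutoff $\eta_{\sqrt R}$ places $\hat f$ in the $R^{-1/2}$-neighborhood of $\P^2$, so Theorem~\ref{candec} decouples into $R^{-1/4}$-caps, not $R^{-1/2}$-caps; running your interpolation with the correct caps yields $R^{3p/4+1}$, which exceeds $R^{(3+p)/2}$ for $p>2$.
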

Before we prove the theorem, we reveal its significance with two immediate consequences.
\begin{co}
\label{jkgimgiigf[i[pogi]]}	
Assume \eqref{sdcyre8we t} holds true for $p=3$ and all $\gamma$ arbitrarily close to $0$. Then the Restriction conjecture for the paraboloid holds: for each $F$ we have
$$\int_{B_{\sqrt{R}}}|EF|^3\lesssim_\epsilon R^\epsilon\|F\|_\infty^3.$$	
\end{co}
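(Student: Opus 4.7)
The plan is to apply Theorem \ref{trelation} with $p=3$, exploit the freedom to take $\gamma$ arbitrarily small, and then rescale $F$ to remove the artificial normalization $\|F\|_\infty\lesssim R$.

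First I would reduce to a normalized setting. Both sides of the desired bound $\int_{B_{\sqrt R}}|EF|^3\lesssim_\epsilon R^\epsilon\|F\|_\infty^3$ are homogeneous of degree $3$ in $F$, so I may replace $F$ by $\widetilde F=(R/\|F\|_\infty)F$, whose sup-norm equals $R$. Using linearity of $E$, the target inequality becomes $\int_{B_{\sqrt R}}|E\widetilde F|^3\lesssim_\epsilon R^{3+\epsilon}$, and this now falls in the regime covered by Theorem \ref{trelation}.

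Next I would invoke Theorem \ref{trelation} with $p=3$ and $M\sim R^{1-\gamma}$ for small $\gamma>0$ (allowed by the hypothesis). Since $\tfrac{p}{2}-1=\tfrac12$ and $R/M\sim R^\gamma$, the conclusion of Theorem \ref{trelation} reads
\[
\int_{B_{\sqrt R}}|E\widetilde F|^3\lesssim_\epsilon R^\epsilon\bigl(R^{3}+R^{3}\cdot R^{\gamma/2}\bigr)=R^{3+\gamma/2+\epsilon}.
\]
For any prescribed $\epsilon'>0$ I would then select $\gamma<\epsilon'$ and $\epsilon<\epsilon'/2$, yielding $\int_{B_{\sqrt R}}|E\widetilde F|^3\lesssim_{\epsilon'} R^{3+\epsilon'}$. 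Multiplying through by $(\|F\|_\infty/R)^3$ undoes the normalization and produces the Restriction conjecture for $F$.

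Rather than a computational obstacle, the main conceptual point is that this reduction genuinely requires $\gamma$ to be allowed arbitrarily close to $0$: the first term $R^{(3+p)/2}=R^3$ in Theorem \ref{trelation} is fixed and already matches the Knapp lower bound, so the role of large ensemble decoupling is precisely to suppress the second term $R^3(R/M)^{(p-2)/2}$. Only in the limit $\gamma\to 0$ does this term become negligible at the critical exponent $p=3$, which is exactly why the corollary cannot be deduced from any weaker version of the hypothesis with a fixed $\gamma>0$.
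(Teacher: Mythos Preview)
Your argument is correct and is exactly the derivation the paper has in mind: the corollary is stated there as an ``immediate consequence'' of Theorem \ref{trelation} without a separate proof, and your normalization plus the choice $\gamma\to 0$ is the intended one-line deduction. Your final paragraph of commentary is accurate but not needed for the proof itself.
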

\begin{co}
\label{cosecon}
Assume \eqref{sdcyre8we t} holds true for some $p\in(3,4]$ and  $M\sim R^{\frac{4-p}{p-2}}$. Then the Restriction conjecture for the paraboloid holds in $L^p$: for each $F$ we have
$$\int_{B_{\sqrt{R}}}|EF|^p\lesssim_\epsilon R^\epsilon\|F\|_\infty^p.$$
Moreover, if $\Delta>0$ then \eqref{sdcyre8we t} is false for  $p\in(3,4]$, $M\sim R^{\frac{4-p}{p-2}+\Delta}$ and some partition into $M$-sets.	
\end{co}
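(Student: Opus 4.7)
The plan is to deduce both assertions of Corollary \ref{cosecon} directly from Theorem \ref{trelation} by plugging in the prescribed value of $M$, together with a homogeneity scaling for the first part and a trivial Knapp-type lower bound for the second.

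For the first statement I would apply Theorem \ref{trelation} with $\gamma=\frac{2p-6}{p-2}$, which lies in $(0,1]$ precisely when $p\in(3,4]$ and matches $M\sim R^{1-\gamma}=R^{(4-p)/(p-2)}$. A direct exponent check gives $R/M=R^{(2p-6)/(p-2)}$, hence $(R/M)^{p/2-1}=R^{p-3}$ and $R^{3}(R/M)^{p/2-1}=R^{p}$; moreover $p>3$ also forces $R^{(3+p)/2}\le R^{p}$. Theorem \ref{trelation} therefore yields $\int_{B_{\sqrt R}}|EF|^{p}\lesssim_{\epsilon}R^{p+\epsilon}$ for $\|F\|_{\infty}\lesssim R$. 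Replacing $F$ by $RF/\|F\|_{\infty}$ and using the homogeneity of $F\mapsto EF$ extends this to the bound $R^{\epsilon}\|F\|_{\infty}^{p}$ for all $F$, which is the desired restriction estimate.

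For the ``moreover'' statement I plan to argue by contradiction. Suppose \eqref{sdcyre8we t} held with $M\sim R^{(4-p)/(p-2)+\Delta}$ for some $\Delta>0$ and every partition into $M$-sets. The same arithmetic now gives $R^{3}(R/M)^{p/2-1}=R^{p-\Delta(p/2-1)}$, while $R^{(3+p)/2}\le R^{p-(p-3)/2}$. Theorem \ref{trelation} would then produce the strengthened bound
\[\int_{B_{\sqrt R}}|EF|^{p}\lesssim_{\epsilon}R^{p-\delta+\epsilon}, \qquad \|F\|_{\infty}\lesssim R,\]
with $\delta=\min\bigl(\Delta(p/2-1),(p-3)/2\bigr)>0$. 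To contradict this I would test against the constant function $F\equiv R$ on $[0,1]^{2}$: then $\|F\|_{\infty}=R$ and $EF(0)=R$, and uniform continuity of the Fourier phase gives $|EF(x)|\ge R/2$ on a fixed small ball $B_{c_0}$ around the origin, so $\int_{B_{\sqrt R}}|EF|^{p}\gtrsim R^{p}$. Choosing $\epsilon<\delta$ contradicts the displayed upper bound once $R$ is large, so the hypothesized decoupling must fail for some $F$ and some partition into $M$-sets.

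The only step carrying any real friction is the exponent bookkeeping that both error terms in Theorem \ref{trelation} stay below the restriction threshold $R^{p}$ throughout the admissible range $\Delta\in(0,(2p-6)/(p-2)]$. Crucially, no delicate wave packet, bush, or Khintchine construction is needed for the second part: the sharpness is already witnessed by the trivial constant function, because for $p>3$ the integral $\int|E\mathbf{1}|^{p}$ is bounded from below by a positive constant and cannot absorb any decaying $R^{-\delta}$ factor.
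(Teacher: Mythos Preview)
Your argument is correct and follows exactly the route the paper takes: the first assertion is an immediate exponent check from Theorem \ref{trelation}, and the ``moreover'' part is obtained by feeding Theorem \ref{trelation} the constant function $F=1_{[0,1]^2}$ (your $F\equiv R$ is the same thing after the homogeneity scaling) to produce the impossible decay $\int_{B_{\sqrt R}}|EF|^p\lesssim R^{-\beta}\|F\|_\infty^p$. One cosmetic remark: the endpoint $\Delta=(2p-6)/(p-2)$ corresponds to $M=R$, where the partition is a single set and \eqref{sdcyre8we t} is trivially true, so your ``admissible range'' should really be the open interval; this does not affect the argument.
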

To see the last part of Corollary \ref{cosecon}, note that if \eqref{sdcyre8we t} held for some $M\sim R^{\frac{4-p}{p-2}+\Delta}$,  then Theorem \ref{trelation} would imply the false inequality (use $F=1_{[0,1]^2}$)
$$\int_{B_{\sqrt{R}}}|EF|^p\lesssim R^{-\beta}\|F\|_\infty^p,$$
for some $\beta>0$.
	
The next two lemmas establish average $l^2$ and $l^4$ estimates, via elementary expansion and counting arguments. These will later be interpolated to get $l^3$ estimates. Such pointwise estimates are then integrated on $B_R$ to get $L^3$ estimates. The idea is that averaging over partitions produces square root cancellation. When combined with the conjectured decoupling into large ensembles, this leads to restriction estimates as described above.

In the following, sums are taken over all partitions $\Pc$ of $\{1,\ldots,R\}$ into sets $P_j$ with uniform size $M\le R$. Partitions are always considered unordered.

\begin{lem}For each $a_i\in\C$, $1\le i\le R$
$$\frac1{\#\Pc}\sum_{\Pc}\sum_{P_j\in\Pc}|\sum_{i\in P_j}a_i|^2\lesssim \frac{M}{R}|\sum_ia_i|^2+\sum_{i}|a_i|^2.$$
\end{lem}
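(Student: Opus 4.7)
The natural plan is to expand the squared modulus and split into diagonal and off-diagonal terms, and then use a simple combinatorial counting argument to evaluate the average of each over the set of partitions.

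First I would write
$$\frac{1}{\#\Pc}\sum_{\Pc}\sum_{P_j\in\Pc}\Big|\sum_{i\in P_j}a_i\Big|^2 = \frac{1}{\#\Pc}\sum_{\Pc}\sum_{P_j\in\Pc}\sum_{i,i'\in P_j}a_i\overline{a_{i'}}.$$
The diagonal contribution ($i=i'$) is trivial: for every partition $\Pc$, each index $i$ lies in exactly one block, so $\sum_{P_j\in\Pc}\sum_{i\in P_j}|a_i|^2=\sum_{i=1}^R|a_i|^2$, and after averaging this contributes exactly $\sum_i|a_i|^2$.

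For the off-diagonal contribution, the key point is that by symmetry the number of partitions $\Pc$ in which two fixed distinct indices $i\neq i'$ lie in the same block depends only on $R$ and $M$. A direct count (pair the two elements, choose $M-2$ companions from the remaining $R-2$, then partition the rest into $R/M-1$ blocks of size $M$) gives the ratio
$$\mathrm{Prob}(i,i'\text{ in same block})=\frac{M-1}{R-1}.$$
Thus the average of the off-diagonal part equals
$$\frac{M-1}{R-1}\sum_{i\neq i'}a_i\overline{a_{i'}}=\frac{M-1}{R-1}\left(\Big|\sum_i a_i\Big|^2-\sum_i|a_i|^2\right).$$

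Adding the two contributions and using $\frac{M-1}{R-1}\le \frac{M}{R}$ (valid since $M\le R$), together with the observation that the coefficient $1-\frac{M-1}{R-1}$ in front of $\sum_i|a_i|^2$ is at most $1$, yields the claimed bound with an absolute constant. The only nontrivial step is the combinatorial computation of the probability, which is clean; no obstacles are anticipated.
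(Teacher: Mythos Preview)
Your proof is correct and follows essentially the same route as the paper: both expand the square into diagonal and off-diagonal terms, compute the probability (equivalently, the fraction of partitions) that two fixed distinct indices land in the same cell as $\frac{M-1}{R-1}$, and arrive at the exact identity $\frac{R-M}{R-1}\sum_i|a_i|^2+\frac{M-1}{R-1}|\sum_i a_i|^2$ before bounding. The paper spells out the explicit counts of $\#\Pc$ and $\#\Pc(i_1,i_2)$ via multinomial coefficients, whereas you phrase it as a probability, but the arguments are otherwise identical.
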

\begin{proof}We have
$$\sum_{\Pc}\sum_{P_j\in\Pc}|\sum_{i\in P_j}a_i|^2=\sum_{\Pc}\sum_{i}|a_i|^2+\sum_{i_1\not=i_2}\#\Pc(i_1,i_2)\overline{a_{i_1}}a_{i_2}$$
where $\Pc(i_1,i_2)$ are the special partitions that put ${i_1},{i_2}$ into the same cell. Note that we have
$$\frac{1}{(R/M)!}{R\choose M} {R-M\choose M}\ldots{2M\choose M}$$
partitions $\Pc$. Indeed, we first select the first cell, which can be done in $R\choose M$ ways. The second to last cell can be selected in $2M\choose M$ ways. We need to divide with $(R/M)!$, as the partition is not ordered.

The number of partitions $\Pc(i_1,i_2)$ is
$$\frac{1}{(\frac{R}{M}-1)!}{R-2\choose M} {R-M-2\choose M}\ldots{2M-2\choose M}.$$
We select the first $\frac{R}{M}-1$ cells from $[1,R]\setminus\{i_1,i_2\}$, and put $i_1,i_2$ in the small cell  left in the end. The number of selected cells needs to be divided by the number of permutations. Note that
$$\#\Pc(i_1,i_2)=\#\Pc\times\frac{R}{M}\frac{M(M-1)}{R(R-1)}\sim \#\Pc M/R.$$
We conclude that
$$\sum_{\Pc}\sum_{P_j\in\Pc}|\sum_{i\in P_j}a_i|^2=$$$$\#\Pc(\sum_{i}|a_i|^2+\frac{M-1}{R-1}\sum_{i_1\not=i_2}\overline{a_{i_1}}a_{i_2})=\#\Pc(\frac{R-M}{R-1}\sum_{i}|a_i|^2+\frac{M-1}{R-1}|\sum_ia_i|^2).$$

\end{proof}

\begin{lem}For each $a_i\in\C$
$$\frac1{\#\Pc}\sum_{\Pc}\sum_{\;P_j\in\Pc}|\sum_{i\in P_j}a_i|^4\lesssim $$$$\frac{M^3}{R^3}|\sum_ia_i|^4+\frac{M^2}{R^2}\sum_i|a_i|^2|\sum_ia_i|^2+\frac{M}{R}\sum_i|a_i|^{3}\sum_i|a_i|+\sum_{i}|a_i|^4.$$
\end{lem}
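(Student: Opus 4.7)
The plan is to mirror the proof of the previous lemma, now expanding a fourth power. I would write
$$\Bigl|\sum_{i\in P_j}a_i\Bigr|^4=\sum_{i_1,i_2,i_3,i_4\in P_j}a_{i_1}a_{i_2}\overline{a_{i_3}}\overline{a_{i_4}}$$
and swap the two summations. For each $4$-tuple $\mathbf{i}=(i_1,i_2,i_3,i_4)$ with $d=d(\mathbf{i})$ distinct entries, the number of partitions $\Pc$ that place $\mathbf{i}$ inside a single cell depends only on $d$ and, by the same selection argument as in the previous lemma applied successively to the $d$ distinct indices, equals
$$\#\Pc\cdot\frac{(M-1)(M-2)\cdots(M-d+1)}{(R-1)(R-2)\cdots(R-d+1)}\le\#\Pc\,(M/R)^{d-1}.$$

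I would then group the $4$-tuples according to their equality pattern---a set partition $\pi$ of $\{1,2,3,4\}$---and write $T_\pi$ for the corresponding sum of $a_{i_1}a_{i_2}\overline{a_{i_3}}\overline{a_{i_4}}$. Setting $S=\sum_ia_i$ and $S_k=\sum_i|a_i|^k$, the averaged left-hand side becomes, up to absolute constants, $\sum_\pi(M/R)^{|\pi|-1}T_\pi$. I would bound each $|T_\pi|$ separately: $|\pi|=1$ yields $S_4$; each of the seven $|\pi|=2$ patterns is of the form $\sum_{\alpha\neq\beta}a_\alpha^2\bar a_\beta^2$, $\sum_{\alpha\neq\beta}|a_\alpha|^2|a_\beta|^2$, or $\sum_{\alpha\neq\beta}a_\alpha|a_\beta|^2\bar a_\beta$, each bounded by $S_2^2$ or $S_1S_3$ and hence by $S_1S_3$ via Cauchy--Schwarz; each of the six $|\pi|=3$ patterns consists of a doubleton block---producing one of $|a_\alpha|^2$, $a_\alpha^2$ or $\bar a_\alpha^2$ (each of modulus $\le S_2$)---times two singletons whose combined modulus is $|S|^2$, yielding $|T_\pi|\lesssim S_2|S|^2$ after absorbing the diagonal-correction terms from the distinctness constraint into the coarser patterns already under control.

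The finest partition ($|\pi|=4$) is the only slightly delicate case. I would handle it using the identity $|S|^4=\sum_\pi T_\pi$ to write $T_{\{1|2|3|4\}}=|S|^4-\sum_{|\pi|<4}T_\pi$ and substitute back; the resulting coefficient of each $T_\pi$ with $|\pi|<4$ remains comparable to $(M/R)^{|\pi|-1}$, since $(M/R)^{|\pi|-1}-(M/R)^3\le(M/R)^{|\pi|-1}$. Combining the bounds yields
$$\frac{1}{\#\Pc}\sum_\Pc\sum_{P_j\in\Pc}\Bigl|\sum_{i\in P_j}a_i\Bigr|^4\lesssim\frac{M^3}{R^3}|S|^4+\frac{M^2}{R^2}S_2|S|^2+\frac{M}{R}S_3S_1+S_4,$$
as desired. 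The main obstacle is the combinatorial bookkeeping of the $15$ set partitions of $\{1,2,3,4\}$ together with the verification that every coarser pattern fits under one of the four claimed terms; a cleaner probabilistic alternative is to recast the left-hand side as $(R/M)\,\E|Y|^4$ with $Y=\sum_iX_ia_i$ and $X_i$ i.i.d.\ Bernoulli$(M/R)$, then estimate $\E|Y|^4\lesssim|\E Y|^4+\E|Y-\E Y|^4$ and compute the latter directly from independence of the centered $W_i=X_i-M/R$.
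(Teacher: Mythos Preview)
Your proposal is correct and follows essentially the same approach as the paper: both expand the fourth power, count the partitions placing a given set of $d$ distinct indices in one cell as $\sim\#\Pc\,(M/R)^{d-1}$, handle the all-distinct contribution by ``completing the square'' (your identity $|S|^4=\sum_\pi T_\pi$ is exactly the paper's move of adding and subtracting the non-distinct terms), and absorb the resulting corrections into the coarser cases. Your set-partition bookkeeping is a bit more systematic than the paper's grouping by number of distinct indices, and your closing probabilistic remark is a nice alternative viewpoint, but the argument is the same in substance.
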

\begin{proof}
The expansion of the term on the left will contain four types of quadruple products.

First, there are the terms $a_{i_1}\overline{a_{i_2}}a_{i_3}\overline{a_{i_4}}$ with $i_1\not=i_2\not=i_3\not=i_4$. Reasoning as in the previous lemma, each such term appears roughly $M^3\#\Pc/R^3$ many times in the summation, as this is the number of partitions that place the four indices in the same cell.

Second, there are the terms of the form $a_{i_1}\overline{a_{i_1}}a_{i_2}\overline{a_{i_3}}$ (and their permutations) with $i_1\not=i_2\not=i_3$. These collect all products where exactly three of the four indices are distinct. Each such term appears roughly $M^2\#\Pc/R^2$ many times.

Third, we will have products containing two distinct indices, of the form $a_{i_1}\overline{a_{i_1}}a_{i_2}\overline{a_{i_2}}$, $a_{i_1}\overline{a_{i_1}}a_{i_1}\overline{a_{i_2}}$ and their permutations. Each of those appear roughly $M\#\Pc/R$ many times.

Finally, each term $|a_i|^4$ appears exactly $\#\Pc$ many times.

The contributions from the first category is dominated by (we add the last term to ``complete the square")
$$\lesssim\frac{M^3\#\Pc}{R^3}(|\sum_ia_i|^4+|\sum_{i_1,i_2,i_3,i_4\atop{\text{not all four distinct}}}a_{i_1}\overline{a_{i_2}}a_{i_3}\overline{a_{i_4}}|).$$
We also have the following estimate for the term we added
$$\frac{M^3\#\Pc}{R^3}|\sum_{i_1,i_2,i_3,i_4\atop{\text{not all four distinct}}}a_{i_1}\overline{a_{i_2}}a_{i_3}\overline{a_{i_4}}|\le$$$$ \frac{M^3\#\Pc}{R^3}(|\sum_{i_1,i_2,i_3,i_4\atop{\text{three distinct indices}}}a_{i_1}\overline{a_{i_2}}a_{i_3}\overline{a_{i_4}}|+|\sum_{i_1,i_2,i_3,i_4\atop{\text{two distinct indices}}}a_{i_1}\overline{a_{i_2}}a_{i_3}\overline{a_{i_4}}|+\sum_{i}|a_i|^4).$$
These contributions are absorbed by the remaining three terms, as the exponent three in the coefficient $(M/R)^3$ here is larger than those in the remaining cases.

Indeed, the contribution from the second case is
$$\lesssim \frac{M^2\#\Pc}{R^2}|\sum_{i_1,i_2,i_3,i_4\atop{\text{three distinct indices}}}a_{i_1}\overline{a_{i_2}}a_{i_3}\overline{a_{i_4}}|.$$	
Let us analyze the case $i_1=i_2$, the other ones are similar. Then, completing the square again
$$\frac{M^2\#\Pc}{R^2}|\sum_{i_1,i_3,i_4\atop{{i_1\not=i_3\not=i_4}}}|a_{i_1}|^2a_{i_3}\overline{a_{i_4}}|\le \frac{M^2\#\Pc}{R^2}(\sum_{i}|a_i|^2|\sum_{i}a_i|^2+\sum_{i_1,i_2,i_3\atop{\text{not all three distinct}}}|a_{i_1}|^2a_{i_3}\overline{a_{i_4}}).$$	
As before, the term we added is absorbed by the contributions of the remaining two cases.

Indeed, the contribution from the third case is
$$\frac{M\#\Pc}{R}|\sum_{i_1,i_2,i_3,i_4\atop{\text{only two distinct indices}}}a_{i_1}\overline{a_{i_2}}a_{i_3}\overline{a_{i_4}}|.$$
This is dominated by$$
\frac{M\#\Pc}{R}((\sum_{i}|a_i|^2)^2+\sum_{i_1}|a_{i_1}|^3\sum_{i_2}|a_{i_2}|).$$
The second term dominates because of Cauchy--Schwarz.
The contribution from the fourth case is
$$\#\Pc\sum_{i}|a_i|^4.$$

\end{proof}
Next, we interpolate.
\begin{co}
	\label{elp}
Assume $|a_i|\lesssim 1$ and let $L=\sum_{i=1}^R|a_i|$,  $S=|\sum_{i=1}^Ra_i|$.
Then for  $2\le p\le 4$
$$\frac1{\#\Pc}\sum_{\Pc}\sum_{\;P_j\in\Pc}|\sum_{i\in P_j}a_i|^p\lesssim (\frac{M}{R})^{\frac{p}{2}-1}L^{p/2}+L+(\frac{M}{R})^{p/2}S^p.$$
\end{co}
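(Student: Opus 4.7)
The plan is to interpolate the $p=2$ and $p=4$ bounds from the two preceding lemmas. First I simplify both using $|a_i|\lesssim 1$, which gives $\sum_i|a_i|^k\lesssim L$ for every $k\ge 1$. Writing $A_p$ for the left-hand side of the corollary at exponent $p$ and $T_j=\sum_{i\in P_j}a_i$, the first lemma becomes $A_2\lesssim L+\tfrac{M}{R}S^2$. For $A_4$, after the same substitution the mixed term $\frac{M^2}{R^2}LS^2$ survives, but AM-GM bounds it by $\tfrac12\bigl(\frac{M^3}{R^3}S^4+\frac{M}{R}L^2\bigr)$, yielding $A_4\lesssim \frac{M^3}{R^3}S^4+\frac{M}{R}L^2+L$.

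Next I apply H\"older with conjugate exponents $\tfrac{2}{4-p}$ and $\tfrac{2}{p-2}$ to the pointwise identity $|T_j|^p=(|T_j|^2)^{(4-p)/2}(|T_j|^4)^{(p-2)/2}$, summed over all pairs $(\Pc,P_j)$. Since these exponents are conjugate and $\#\Pc$ factors accordingly, this produces the log-convex interpolation $A_p\le A_2^{(4-p)/2}A_4^{(p-2)/2}$.

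The rest is bookkeeping via a two-case analysis, expanding each factor with $(x_1+x_2+x_3)^\theta\lesssim x_1^\theta+x_2^\theta+x_3^\theta$ for $\theta\in[0,1]$. In Case I, $L\ge\frac{M}{R}S^2$ so $A_2\lesssim L$; the expansion produces the two target terms $L$ and $(\frac{M}{R})^{(p-2)/2}L^{p/2}$ plus a single cross term $(\frac{M}{R})^{3(p-2)/2}L^{(4-p)/2}S^{2(p-2)}$, which the case hypothesis, rewritten as $(\frac{M}{R}S^2)^{p-2}\le L^{p-2}$, folds exactly into $(\frac{M}{R})^{(p-2)/2}L^{p/2}$. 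In Case II, $L<\frac{M}{R}S^2$ so $A_2\lesssim\frac{M}{R}S^2$; the expansion produces $(\frac{M}{R})^{p-1}S^p$, $\frac{M}{R}S^{4-p}L^{p-2}$, and $(\frac{M}{R})^{(4-p)/2}S^{4-p}L^{(p-2)/2}$. The first is $\le(\frac{M}{R})^{p/2}S^p$ since $p-1\ge p/2$; applying $L^\alpha\le(\frac{M}{R}S^2)^\alpha$ turns the second into the same target and the third into $\frac{M}{R}S^2$.

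The main obstacle is showing that this last contribution $\frac{M}{R}S^2$ is absorbed into the target $(\frac{M}{R})^{p/2}S^p$, which requires $S\ge(R/M)^{1/2}$. This lower bound is not automatic, but Case II is self-improving: the triangle inequality $S\le L$ combined with $L<\frac{M}{R}S^2$ gives $1<\frac{M}{R}L$, so $L\ge R/M\ge 1$; plugged back into the case hypothesis this yields $S^2\ge RL/M\ge R/M$, exactly the bound required, completing the proof.
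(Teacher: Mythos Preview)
Your proof is correct and follows essentially the same route as the paper: interpolate via $A_p\le A_2^{(4-p)/2}A_4^{(p-2)/2}$ and split into the two cases $L\gtrless \frac{M}{R}S^2$, with the key self-improvement $S\le L<\frac{M}{R}S^2\Rightarrow L\ge 1$ in Case~II. The only organizational difference is that the paper uses the case hypothesis to reduce $A_4$ to a single monomial \emph{before} multiplying (in Case~1, $A_4\lesssim \frac{M}{R}L^2+L$; in Case~2, $A_4\lesssim \frac{M^2}{R^2}S^4$), which avoids the cross-term bookkeeping you carry out, but the content is the same.
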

\begin{proof}	
Recall that
$$\frac1{\#\Pc}\sum_{\Pc}\sum_{P_j\in\Pc}|\sum_{i\in P_j}a_i|^2\lesssim \frac{M}{R}S^2+L$$
and
$$\frac1{\#\Pc}\sum_{\Pc}\sum_{\;P_j\in\Pc}|\sum_{i\in P_j}a_i|^4\lesssim \frac{M^3}{R^3}S^4+\frac{M^2}{R^2}LS^2+\frac{M}{R}L^2+L.$$
We combine these with the inequality $\|\cdot\|_{l^p}^p\le \|\cdot\|_{l^2}^{4-p}\|\cdot\|_{l^4}^{2p-4}$.
\\
\\
Case 1. If $L\ge \frac{M}{R}S^2$ then we get
$$\frac1{\#\Pc}\sum_{\Pc}\sum_{\;P_j\in\Pc}|\sum_{i\in P_j}a_i|^p\lesssim L^{2-\frac{p}2}(\frac{M}{R}L^2+L)^{\frac{p}2-1}\lesssim (\frac{M}{R})^{\frac{p}{2}-1}L^{p/2}+L.$$
\\
\\
Case 2. If  $L\le \frac{M}{R}S^2$ then the term $\|\cdot \|_{l^4}^4$ is dominated by $\frac{M^3}{R^3}S^4+L$. Since $S\le L$ and $M\le R$, we must have that $L\ge 1$. Thus $L\le L^2\le \frac{M^2}{R^2}S^4$, and we may further dominate  $\|\cdot \|_{l^4}^4$ by $\frac{M^2}{R^2}S^4$.
We get
$$\frac1{\#\Pc}\sum_{\Pc}\sum_{\;P_j\in\Pc}|\sum_{i\in P_j}a_i|^p\lesssim (\frac{M}{R}S^2)^{2-\frac{p}2}(\frac{M^2}{R^2}S^4)^{\frac{p}{2}-1}\lesssim (\frac{M}{R})^{p/2}S^p.$$

\end{proof}
\medskip

We now verify Theorem \ref{trelation}.
\begin{proof}
The first part of the argument follows the lines of \cite{Ca}.
We may assume $B_{\sqrt{R}}$ is centered at the origin. Due to the smoothing effect of the convolution with the Fourier transform of the characteristic function of this ball, we may assume $F$ is constant on each $q$. For such $F$, we will in fact prove the superficially stronger inequality on the larger ball $B_R$ centered at the origin
$$\int_{B_{R}}|EF|^p\lesssim_\epsilon R^\epsilon(R^{\frac{3+p}{2}}+R^3(\frac{R}{M})^{\frac{p}{2}-1}).$$

On $B_R$, each $E_{q_i}F$ is a single wave packet $\Phi_i$ that is spatially localized near a $(R,R^{1/2},R^{1/2})$-tube $T_i$ through the origin, and has $L^\infty$ norm $\lesssim 1$. We may write this as
$$E_{q_i}F(x)=\Phi_i(x)1_{R^\delta T_i}(x)+O_\delta(R^{-1000}),\;\;x\in B_R,$$
for $\delta$ as small as we wish.  The first key inequality we use is the well-known bush estimate
$$\|\sum_{i=1}^R1_{R^\delta T_i}\|_{r}\lesssim R^{1+\frac3{2r}+O(\delta)}$$
valid for $r\ge 3/2$. This follows from an easy computation. It shows that the main contribution to the  integral comes from the core $B_{\sqrt{R}}$ where all tubes intersect. This justifies the fact that moving integration from this smaller ball to $B_R$ is not lossy.

We average \eqref{sdcyre8we t} over all partitions  $\Pc$ into $M$-sets to get
\begin{equation}
\label{eufyrfucfr}
\|EF\|_{L^p(B_R)}^p\le C_R  (\frac{R}{M})^{\frac{p}2-1}\frac{1}{\#\Pc}\sum_{\Pc}\sum_{P_j\in\Pc}^{R/M}\|\sum_{q_i\in P_j}E_{q_i}F\|_{L^p(B_R)}^p,\end{equation}
with $C_R\lesssim_\epsilon R^\epsilon$. Writing
$a_i(x)=\Phi_i(x)1_{R^\delta T_i}(x)$, we have
$$(\frac{R}{M})^{\frac{p}{2}-1}\frac{1}{\#\Pc}\sum_{\Pc}\sum_{P_j\in\Pc}^{R/M}|\sum_{q_i\in P_j}E_{q_i}F(x)|^p=(\frac{R}{M})^{\frac{p}{2}-1}\frac{1}{\#\Pc}\sum_{\Pc}\sum_{P_j\in\Pc}^{R/M}|\sum_{q_i\in P_j}a_i(x)|^p+O_\delta(R^{-100}).$$
Write
\begin{equation}
\label{eror}
S(x)=|\sum_ia_i(x)|=|EF(x)|+O_\delta(R^{-100})
\end{equation}
and
$$L(x)=\#\{i:\;x\in 1_{R^\delta T_i}\}.$$
Corollary \ref{elp} implies that
$$(\frac{R}{M})^{\frac{p}{2}-1}\frac{1}{\#\Pc}\sum_{\Pc}\sum_{P_j\in\Pc}^{R/M}|\sum_{q_i\in P_j}a_i(x)|^p\lesssim L(x)^{p/2}+(\frac{R}{M})^{\frac{p}{2}-1}L(x)+\frac{M}{R}S(x)^3.$$
We integrate this over $B_R$ and combine with \eqref{eufyrfucfr} and \eqref{eror} to conclude that
$$\|EF\|_{L^p(B_R)}^p\lesssim C_R(\int_{B_R}L^{p/2}+(\frac{R}{M})^{\frac{p}2-1}\int L+\frac{M}{R}\|EF\|_{L^3(B_R)}^3)+O_\delta(R^{-10}).$$
If $M\ll R/C_R$ this leads to
$$\|EF\|_{L^p(B_R)}^p\lesssim C_R(\int_{B_R}L^{p/2}+(\frac{R}{M})^{\frac{p}2-1}\int_{B_R} L)+O_\delta(R^{-10}).$$
Using the bush estimate and the trivial
 $\int_{B_R}L\lesssim R^{3+O(\delta)}$, we get
$$\|EF\|_{L^p(B_R)}^p\lesssim_\epsilon R^\epsilon(R^{\frac{3+p}{2}}+R^3(\frac{R}{M})^{\frac{p}{2}-1}),$$
by choosing $\delta$ small.
\end{proof}
The conclusion of Corollary \ref{jkgimgiigf[i[pogi]]}	 remains true if we average over a more restricted collection of  partitions of the following form. We fix $M$ collections $S_k=S_k(M)$ each consisting of $R/M$  squares $q$, in such a way that the $S_k$ form a partition of all squares. The partitions $\Pc$ we will use in the next theorem consist of $M$-sets $P_j$, each having exactly one element from each $S_k$. There are $(\frac{R}{M}!)^{M-1}$ such partitions. The proof is left as an exercise to the reader.
\begin{te}
\label{sparse}	
Assume that for all $M$ sufficiently close to $R$ we have	
$$\|EF\|_{L^3(B_R)}^3\lesssim_\epsilon R^\epsilon \frac1{\#\Pc}\sum_{\Pc=\{P_j\}}(\frac{R}{M})^{1/2}\sum_{j=1}^{R/M}\|E_{P_j}F\|_{L^3(B_R)}^3.$$
Then the Restriction conjecture for $\P^2$ holds.
\end{te}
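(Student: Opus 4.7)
The plan is to follow the proof of Theorem~\ref{trelation} for $p=3$, replacing the combinatorial input (the two lemmas preceding Corollary~\ref{elp}) by analogues valid for the restricted class of partitions described above. The wave packet decomposition, the bush estimate, and the final integration argument are used verbatim; only the counting has to be redone.

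The key combinatorial identity I would establish first is that for fixed distinct indices $i_1,\ldots,i_t$, the number of restricted partitions placing all of them in a common cell equals exactly $(M/R)^{t-1}\#\Pc$ when the indices lie in pairwise distinct classes $S_{k_s}$, and is zero otherwise. This matches the coincidence probabilities from the unrestricted model of Theorem~\ref{trelation} on the "admissible" configurations and vanishes on the "forbidden" ones. Expanding $\frac{1}{\#\Pc}\sum_{\Pc}\sum_{P_j}|\sum_{i\in P_j}a_i|^2$ and writing $\sigma_k=\sum_{i\in S_k}a_i$, the off-diagonal terms contribute $\frac{M}{R}(S^2-\sum_k|\sigma_k|^2)$; the non-positive correction is discarded and we recover the $l^2$ bound $\frac{M}{R}S^2+L$ unchanged. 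For the $l^4$ average, I would use the identity
$$\text{restricted avg}=\text{unrestricted avg}-\sum_{\text{forbidden quadruples}} c_{\text{unrestr}}\,a_{i_1}\overline{a_{i_2}}a_{i_3}\overline{a_{i_4}}.$$
The unrestricted average is controlled by the original $l^4$ lemma, and the forbidden-correction sum is absolutely bounded by the same right-hand side after re-running the four-case analysis (four / three / two / one distinct indices) restricted to those quadruples in which some pair of distinct indices shares a class $S_k$. Pulling out the inner sum over the shared class yields factors of $|\sigma_k|$ or $\sum_{i\in S_k}|a_i|$, and Cauchy--Schwarz majorants of the form $\sum_k|\sigma_k|^2\le SL$ and $\sum_k\sum_{i\in S_k}|a_i|\,|\sigma_k|\le SL$ show that these pieces are absorbed by the terms $\frac{M^2}{R^2}LS^2+\frac{M}{R}L^2+L$ already present. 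Interpolation then reproduces Corollary~\ref{elp} for the restricted average.

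With the interpolated pointwise bound in hand, the remainder of the argument is verbatim Theorem~\ref{trelation}. For $\|F\|_\infty\lesssim R$ constant on each square $q_i$, one writes $E_{q_i}F=\Phi_i 1_{R^\delta T_i}+O_\delta(R^{-1000})$ and inserts the pointwise bound into the averaged hypothesis (applied with decoupling constant $(R/M)^{1/2}$). The bush estimate $\|\sum_i 1_{R^\delta T_i}\|_{3/2}\lesssim R^{2+O(\delta)}$ and the trivial bound $\int_{B_R}L\lesssim R^{3+O(\delta)}$ give $\int_{B_R}|EF|^3\lesssim_\epsilon R^\epsilon(R^3+R^3(R/M)^{1/2})$. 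Taking $M=R^{1-\gamma}$ with $\gamma\to 0^+$ and rescaling so that $\|F\|_\infty=1$ delivers the desired Restriction bound $\int_{B_{\sqrt R}}|EF|^3\lesssim_\epsilon R^\epsilon\|F\|_\infty^3$.

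The main obstacle is the $l^4$ bookkeeping described above: the forbidden-correction terms carry complex signs, so a sign-independent majorization by the terms already appearing in the unrestricted $l^4$ lemma is needed. I expect the case analysis to go through cleanly, but ensuring that no new dependencies on $R/M$ slip in beyond those already tolerated is the delicate point.
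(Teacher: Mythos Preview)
The paper leaves this theorem as an exercise, so your plan---re-run the proof of Theorem~\ref{trelation} with the combinatorics of Corollary~\ref{elp} redone for the restricted family of partitions---is exactly the intended route, and the coincidence probability $(M/R)^{t-1}$ you compute for $t$ distinct indices in $t$ distinct classes (and $0$ otherwise) is correct.

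There is, however, a genuine error in the majorants you invoke for the forbidden correction. The inequality $\sum_k|\sigma_k|^2\le SL$ is false: take $a_i=\pm 1$ chosen so that $S=|\sum_i a_i|=0$ while each $|\sigma_k|>0$. The same example kills $\sum_k|\sigma_k|\sum_{i\in S_k}|a_i|\le SL$. The bound that actually works is
\[
\sum_k|\sigma_k|^2\le \frac{R}{M}\sum_i|a_i|^2\le \frac{R}{M}L,
\qquad
\sum_k\Big(\sum_{i\in S_k}|a_i|\Big)^m\le \Big(\frac{R}{M}\Big)^{m-1}L,
\]
the first by Cauchy--Schwarz on each class of size $R/M$, the second since $\sum_{i\in S_k}|a_i|\le R/M$. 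These are the only class-dependent quantities that appear when you sum a forbidden quadruple over the shared class. The extra factor $R/M$ is harmless: a forbidden contribution at level $t$ carries coefficient $(M/R)^{t-1}$, and after applying the bound above it becomes $\lesssim (M/R)^{t-2}$ times one of the expressions $L$, $L^2$, $LS^2$ already present at level $t-1$ in the $l^4$ lemma. So every forbidden piece is absorbed, and Corollary~\ref{elp} holds verbatim for the restricted average. The rest of your argument (wave packets, bush estimate, $M=R^{1-\gamma}$ with $\gamma\to 0$) then goes through unchanged.
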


The situation is even more dramatic for $l^2$ decoupling.
\begin{te}
\label{warning}	
Fix $\beta\in (0,1)$. Assume \eqref{eltwolargeen} holds for all partitions into $R^\beta$-sets $S_i$ when $p=3$. Then the Restriction conjecture for $\P^2$ holds.
\end{te}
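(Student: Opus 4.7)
The plan is to reduce Theorem \ref{warning} to the hypothesis of Corollary \ref{jkgimgiigf[i[pogi]]} by establishing, from the $\ell^2$ assumption at scale $R^\beta$, the $\ell^3$ large ensemble decoupling \eqref{sdcyre8we t} at $p=3$ for $M\sim R^{1-\gamma}$ with $\gamma$ arbitrarily close to $0$. The first step is purely algebraic. Applying Hölder's inequality $(\sum_{j=1}^{N}a_j^2)^{3/2}\le N^{1/2}\sum_j a_j^3$ to the $N=R^{1-\beta}$ terms of any $R^\beta$-set partition converts the $\ell^2$ hypothesis into
\begin{equation*}
\|EF\|_{L^3(B_R)}^3\lesssim_{\epsilon} R^\epsilon R^{(1-\beta)/2}\sum_{j}\|E_{P_j}F\|_{L^3(B_R)}^3,
\end{equation*}
which is exactly \eqref{sdcyre8we t} at $p=3$, $M=R^\beta$. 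Direct use of Theorem \ref{trelation} then gives $\int_{B_{\sqrt R}}|EF|^3\lesssim_\epsilon R^{3+(1-\beta)/2+\epsilon}$ when $\|F\|_\infty\le R$, which is short of the Restriction target $R^{3+\epsilon}$ by the factor $R^{(1-\beta)/2}$.

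To close that gap, the plan is to iterate and upgrade the decoupling scale from $R^\beta$ to $R^{\beta'}$ for any $\beta'\in(\beta,1)$. The feature that makes this possible is that the $\ell^2$ hypothesis applies uniformly to any $F$, in particular to test functions of the form $F\cdot 1_{Q}$ for an arbitrary union $Q$ of canonical squares: for $\tilde F=F\cdot 1_Q$, choosing a partition into $R^\beta$-sets that is compatible with $Q$ yields the internal inequality $\|E_QF\|_{L^3(B_R)}^2\lesssim_\epsilon R^\epsilon\sum_{P_i\subset Q}\|E_{P_i}F\|_{L^3(B_R)}^2$. Starting with a coarse partition into $R^{\beta'}$-sets $\{Q_k\}$ and a random refinement into $R^\beta$-sets $\{P_i\}$, I combine this internal inequality with an averaging of partitions and the pointwise estimates of Corollary \ref{elp}, aiming to promote the decoupling to $R^{\beta'}$-sets with only an $R^\epsilon$ multiplicative loss per iteration. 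Finitely many iterations then drive $\beta'$ arbitrarily close to $1$, and Hölder applied at the upgraded scale supplies \eqref{sdcyre8we t} with $\gamma=1-\beta'$ arbitrarily small; Corollary \ref{jkgimgiigf[i[pogi]]} then closes the argument.

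The hard part is the iteration. Applied to $\tilde F=F\cdot 1_{Q_k}$, the $\ell^2$ hypothesis naturally provides $\|E_{Q_k}F\|_{L^3}^2\lesssim_\epsilon R^\epsilon\sum_{P_i\subset Q_k}\|E_{P_i}F\|_{L^3}^2$, which is the opposite of the reverse inequality needed to pass from an $R^\beta$-decoupling to an $R^{\beta'}$-decoupling. Extracting the reverse direction requires averaging over random refinements of each $Q_k$ and invoking Corollary \ref{elp} to control the pointwise wave-packet sums against the bush and $L^{3/2}$-Kakeya type integrals that appear in the proof of Theorem \ref{trelation}. The delicate point is to ensure the compound losses remain $R^\epsilon$ across the iteration; this is where I expect the most care is required, and where the strictly stronger nature of $\ell^2$ (versus the $\ell^3$ version obtained from Hölder) must be exploited to avoid the accumulation of $R^{(1-\beta)/2}$ factors that would otherwise reappear at every step.
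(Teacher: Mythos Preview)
Your first paragraph is fine: H\"older converts the $\ell^2$ hypothesis at scale $R^\beta$ into \eqref{sdcyre8we t} with $M=R^\beta$, and Theorem \ref{trelation} then gives the deficient bound $R^{3+(1-\beta)/2+\epsilon}$. The gap is in the iteration you propose to close that deficit. Applying the hypothesis to $\tilde F=F\cdot 1_{Q_k}$ yields
\[
\|E_{Q_k}F\|_{L^3(B_R)}^2\lesssim_\epsilon R^\epsilon\sum_{P_i\subset Q_k}\|E_{P_i}F\|_{L^3(B_R)}^2,
\]
which, as you note, points the wrong way: it further fragments $Q_k$ rather than assembling the $P_i$ back into $Q_k$. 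Averaging over random refinements and invoking Corollary \ref{elp} does not reverse this. Corollary \ref{elp} controls, for fixed $x$, the average over partitions of $\sum_{P_j}|\sum_{i\in P_j}a_i|^p$ \emph{from above} in terms of $L(x)$ and $S(x)$; it never produces a lower bound of the form $\sum_{P_i\subset Q_k}\|E_{P_i}F\|^2\gtrsim\|E_{Q_k}F\|^2$, which is what an upgrade from $R^\beta$-sets to $R^{\beta'}$-sets would require. In short, there is no recoupling mechanism available here, and without one each pass through your loop regenerates the same $R^{(1-\beta)/2}$ loss rather than shrinking it. The sentence ``this is where I expect the most care is required'' is precisely the place where the argument is missing an idea, not merely care.

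The paper avoids the iteration entirely by using \emph{parabolic rescaling}, which you do not mention. One restricts to $F$ supported on the small square $[0,R^{(\beta-1)/2+\delta}]^2$; this square contains only $R'=R^{\beta+2\delta}$ canonical $1/\sqrt{R}$-squares, so a partition into $R^\beta$-sets has merely $R'/M=R^{2\delta}$ cells. Rescaling the small square to $[0,1]^2$ turns the hypothesis into the $\ell^3$ inequality \eqref{sdcyre8we t} at the new scale $R'$ with $M=(R')^{\beta/(\beta+2\delta)}$, i.e.\ with $\gamma=2\delta/(\beta+2\delta)$ as small as one likes. One then reruns the proof of Theorem \ref{trelation} at scale $R'$ (the integration domain becomes an elongated box, but the wave-packet mass is concentrated inside $[-R',R']^3$, so this is harmless) and lets $\delta\to 0$. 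Parabolic rescaling is the device that trades spatial scale for the ratio $R/M$; it is exactly what substitutes for the bootstrap you were attempting.
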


\begin{proof}Fix $\delta$ such that $\frac{\beta-1}{2}+\delta\le 0$. Write $R'=R^{\beta+2\delta}$ and $M=R^{\beta}=(R')^{\frac{\beta}{\beta+2\delta}}$. We consider $F$ supported  on $[0,R^{\frac{\beta-1}{2}+\delta}]^2$. Inequality \eqref{eltwolargeen} and H\"older's inequality implies that for each such $F$ and each partition $\Pc=\{P_j\}$ of the collection of $1/\sqrt{R}$-squares $q$ in $[0,R^{\frac{\beta-1}{2}+\delta}]^2$ into $M$-sets $P_j$ we have
$$\|EF\|_{L^3(B_R)}^3\lesssim_\epsilon R^\epsilon (\frac{R'}{M})^{1/2}\sum_{j=1}^{R'/M}\|E_{P_j}F\|_{L^3(B_{R})}^3.$$	
Using parabolic rescaling $\xi=(\xi_1,\xi_2)\to \xi'=R^{\frac{1-\beta}{2}-\delta}(\xi_1,\xi_2)$ on the ball $B_R=[-R,R]^3$, this can be written as 	
$$\|E\tilde{F}\|_{L^3([-R^{\frac{\beta+1}{2}+\delta},R^{\frac{\beta+1}{2}+\delta} ]^2\times[-R',R'])}^3\lesssim_\epsilon (R')^{\epsilon} (\frac{R'}{M})^{1/2}\sum_{j=1}^{R'/M}\|E_{P_j}\tilde{F}\|_{L^3([-R^{\frac{\beta+1}{2}+\delta},R^{\frac{\beta+1}{2}+\delta} ]^2\times[-R',R'])}^3.$$
This holds true for all $\tilde{F}:[0,1]^2\to\C$ and all  partitions of $[0,1]^2$ into $M$-sets $P_j$ consisting of  $1/\sqrt{R'}$-squares $q'$.
We will use the superficially weaker consequence
 \begin{equation}
 \label{eiofufwll;slplfp}
 \|E\tilde{F}\|_{L^3([-R',R']^3)}^3\lesssim_\epsilon (R')^\epsilon (\frac{R'}{M})^{1/2}\sum_{j=1}^{R'/M}\|E_{P_j}\tilde{F}\|_{L^3(\R^2\times[-R',R'])}^3.\end{equation}
 The rest of the argument is identical to the one for Theorem \ref{trelation}. We consider an arbitrary $\tilde{F}$ with $\|\tilde{F}\|_{\infty}\lesssim R'$ and prove that
 $$\int_{[0,\sqrt{R'}]^3}|\tilde{F}|^3\lesssim_\epsilon (R')^{3+\epsilon}.$$
 We may assume $\tilde{F}$ is constant on each $q'$. For such $\tilde{F}$ we prove the superficially stronger inequality
  \begin{equation}
  \label{mjerufue89u8uv8}
  \|E\tilde{F}\|_{L^3([-R',R']^3)}^3\lesssim_\epsilon (R')^{3+\epsilon}.
  \end{equation}
  We then average \eqref{eiofufwll;slplfp} over all partitions, and proceed as before to find
  $$\int_{[-R',R']^3}|E\tilde{F}|^p\lesssim_\epsilon (R')^\epsilon((R')^{3}+(R')^3(\frac{R'}{M})^{3/2}).$$
  We have used that $$\|E_{P_j}\tilde{F}\|_{L^3(\R^2\times[-R',R'])}^3\approx \|E_{P_j}\tilde{F}\|_{L^3([-R',R']^3)}^3.$$
  This is because the mass of $E_{P_j}\tilde{F}$ is concentrated in a tube that lives in $[-R',R']^3$.

  Recall that $R'/M=R^{2\delta}$. The estimate above holds uniformly for each $R'\gg 1$ and $\delta$. Inequality \eqref{mjerufue89u8uv8} is seen to hold by letting $\delta,\epsilon\to0$.

\end{proof}

When $M$ is small, the validity of \eqref{sdcyre8we t} seems much weaker than the full Restriction conjecture. For example, the case $M=1$ is Theorem \ref{candec}. The value $M=R^{1/2}$ is particularly appealing, as \eqref{sdcyre8we t} for this $M$ and $p=10/3$ implies (via Corollary \ref{cosecon}) the sharp  restriction estimate for $p=10/3$. This exponent, initially established by Tao \cite{Ta}, was a landmark threshold until it was reproved and slightly  improved in \cite{BG}. This suggests that perhaps \eqref{sdcyre8we t} could be proved with existing technology when $M$ is $R^{1/2}$ (or smaller). We caution that the $l^2$ version of this when $p=3$ implies the Restriction conjecture, see Theorem \ref{warning}.

\begin{con}
	\label{c2}
	Assume each $S_i$ is a $\sqrt{R}$-set.
	The inequality
	\begin{equation}
	\label{cjfhur}
	\|EF\|_{L^p(B_R)}\lesssim_{\epsilon}R^{\frac12(\frac12-\frac1p)+\epsilon} (\sum_{i=1}^{R^{1/2}}\|E_{S_i}F\|_{L^p(B_R)}^p)^{1/p}
	\end{equation}
	holds for $2\le p\le 10/3$.	
\end{con}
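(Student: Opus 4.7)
The plan is to treat Conjecture \ref{c2} as an interpolation between a trivial $p=2$ endpoint and the critical $p=10/3$ endpoint, the latter attacked by a Bourgain-Guth broad/narrow scheme fueled by Tao's bilinear restriction theorem. The $p=2$ case is $L^2$-orthogonality: Fourier-truncating $E_{S_i}F$ by $\eta_R$ places its support in a finitely overlapping $1/\sqrt R$-neighborhood of $S_i\subset\P^2$, so $\|EF\|_{L^2(B_R)}^2\approx\sum_i\|E_{S_i}F\|_{L^2(B_R)}^2$, which is the conjecture at $p=2$ with the matching exponent $\frac12(\frac12-\frac1p)=0$. Intermediate $p\in(2,10/3)$ would then follow by the standard log-convex interpolation of decoupling constants.

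For the critical endpoint $p=10/3$, I would tile $[0,1]^2$ by $1/K$-caps $\tau$ with $K$ a large fixed parameter and use a pointwise broad/narrow dichotomy: at each $x\in B_R$, either $|EF(x)|\lesssim K^{O(1)}\max_\tau|E_\tau F(x)|$ (narrow) or $|EF(x)|^2\lesssim K^{O(1)}|E_{\tau_1}F(x)E_{\tau_2}F(x)|$ for two $K^{-1}$-separated caps (broad). In the broad regime, apply Tao's bilinear restriction estimate
$$\||E_{\tau_1}F\cdot E_{\tau_2}F|^{1/2}\|_{L^{10/3}(B_R)}\lesssim R^\epsilon\|F\|_{L^2([0,1]^2)},$$
split $\|F\|_2^2=\sum_i\|F1_{S_i}\|_2^2$, and convert each summand back to $\|E_{S_i}F\|_{10/3}$ via Plancherel together with the wave-packet comparison $\|F1_q\|_2\approx R^{-1/10}\|E_qF\|_{10/3}$ for individual $1/\sqrt R$-squares $q$. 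A direct H\"older computation over the $R$ canonical squares then produces $\|F\|_2\lesssim R^{1/10}(\sum_i\|E_{S_i}F\|_{10/3}^{10/3})^{3/10}$, which matches the target exponent $R^{\frac12(\frac12-\frac3{10})}=R^{1/10}$. The narrow regime is handled by parabolic rescaling on each $\tau$: the ball $B_R$ becomes $B_{R/K^2}$ and $S_i\cap\tau$ becomes a set of at most $\sqrt R/K^2=\sqrt{R/K^2}/K$ canonical squares at the new scale $R'=R/K^2$, so the inductive hypothesis at scale $R'$ applies and contributes a narrow term of order $K^{-c}R^{1/10}(\sum_i\|E_{S_i}F\|_{10/3}^{10/3})^{3/10}$ that is absorbed for $K$ large.

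The hard part is the broad step: Tao's bilinear inequality outputs the partition-blind quantity $\|F\|_2$, and the $S_i$ are arbitrary $\sqrt R$-sets with no geometric coherence, so one cannot refine the bilinear bound to each $S_i$ separately via wave packets or canonical decoupling. The H\"older bookkeeping above correctly reproduces $R^{1/10}$, but only under the assumption that each $\|E_{S_i}F\|_{10/3}^{10/3}$ is essentially the sum of single wave-packet contributions; this can fail when wave packets inside some $S_i$ constructively interfere. A successful proof will likely require either a refined bilinear estimate that retains partition-level data on its right-hand side (in the spirit of the refined wave-packet methods developed by Guth and collaborators), or a combinatorial argument exploiting the specific count $|S_i|=\sqrt R$ to rule out the constructive-interference configurations that would produce the genuinely stronger $l^2$ form already forbidden by Theorem \ref{warning}.
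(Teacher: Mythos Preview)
This statement is a \emph{conjecture}; the paper does not prove it. What the paper actually does after stating it is much more modest: it verifies \eqref{cjfhur} only in the special case where $F$ is such that each $E_qF$ is a single wave packet with $|E_qF|\approx 1_{T_q}$. That verification uses the reverse square function estimate from Tao's bilinear theorem, Wolff's Kakeya inequality for the upper bound, and Rubio de Francia for the lower bound on $\|E_{S_i}F\|_{10/3}$. It is not an induction and does not attempt general $F$.

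Your proposal aims at the full conjecture, and there are genuine gaps beyond the one you already flag. First, the opening interpolation is not available: as the paper remarks just after Conjecture~\ref{c1}, decoupling constants for large-ensemble partitions do not interpolate in $p$ in the usual log-convex way, so reducing to $p=2$ and $p=10/3$ is not justified. Second, the narrow step does not close. You assert that $S_i\cap\tau$ has at most $\sqrt{R}/K^2$ canonical squares, but for an \emph{arbitrary} $\sqrt{R}$-set nothing prevents all of $S_i$ from sitting inside a single $1/K$-cap $\tau$; after rescaling you would then face a $\sqrt{R}$-set at scale $R'=R/K^2$, which is a $K\sqrt{R'}$-set, not a $\sqrt{R'}$-set, and the induction hypothesis no longer applies. (Compare with the proof of Theorem~\ref{t2}, where a one-dimensional structural hypothesis on the $S_i$ is precisely what makes the count survive rescaling.) Third, in the broad step your conversion $\|F1_q\|_2\approx R^{-1/10}\|E_qF\|_{10/3}$ is only an $\approx$ for single wave packets; the inequality that holds in general is $\|F1_q\|_2\lesssim R^{1/10}\|E_qF\|_{10/3}$ (H\"older on $B_R$), and feeding that into your H\"older bookkeeping produces $R^{3/10}$ rather than $R^{1/10}$. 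So the broad step, as written, recovers exactly the single-wave-packet case the paper already treats and nothing more---which you correctly diagnose in your final paragraph. In short, the scheme as it stands reproduces the paper's evidence but does not go beyond it; the conjecture remains open.
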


We offer evidence for the conjecture by verifying the case
when $EF=\sum_T EF_T$, with $EF_T$ a single wave packet corresponding to each $q$ (so $T=T_q$), $|EF_T|\approx 1_T$ on $B_R$. We start with the endpoint $p=10/3$.

First, an easy consequence of Tao's bilinear theorem \cite{Ta} is the  reverse square function estimate (see e.g. Exercise 7.25 in \cite{Dbook})
$$\|EF\|_{L^{10/3}(B_R)}\lesssim_\epsilon R^{1/20+\epsilon}\|(\sum_T |EF_T|^2)^{1/2}\|_{L^{10/3}(B_R)}.$$
This inequality holds true for arbitrary $F$, but is sharp in our case, when all tubes intersect.

Next, note that
$$\|(\sum_T |EF_T|^2)^{1/2}\|_{L^{10/3}(B_R)}\approx \|\sum_T 1_T\|_{L^{5/3}}^{1/2}.$$

Then, using Wolff's Kakeya estimate \cite{Wo} (there is one tube per direction) we have
$$\|\sum_T 1_T\|_{5/3}\lesssim_\epsilon \|\sum_T 1_T\|_{L^1}^{3/5}R^{1/10+\epsilon}.$$
Putting these together we get
$$
\|EF\|_{L^{10/3}(B_R)}\lesssim_\epsilon  \|\sum_T 1_T\|_{L^1}^{3/10}R^{1/10+\epsilon}.
$$
Finally, note that by using Rubio de Francia's square function estimate as before we find
$$\|E_{S_i}F\|_{L^{10/3}(B_R)}\gtrsim \|(\sum_{T=T_q\atop{q\subset S_i}}|EF_T|^2)^{1/2}\|_{L^{10/3}(B_R)}\approx  \|\sum_{T=T_q\atop{q\subset S_i}} 1_T\|_{L^{5/3}}^{1/2}\gtrsim \|\sum_{T=T_q\atop{q\subset S_i}} 1_T\|_{L^{1}}^{3/10}.$$
The last inequality is due to the fact that the sum is either zero or greater than 1.

When $p<10/3$, the inequality loses strength. The upper bound  $\|EF\|_{L^p(B_R)}\lesssim R^{\frac{5}{2p}+\frac14}$  follows by interpolating  the $L^{10/3}$ bound $O(R)$ from above  with the trivial $L^2$ bound $O(R^{3/2})$. This estimate is not sharp, except for the endpoints, but suffices for our purpose. More precisely, the Restriction conjecture
 anticipates that $\|EF\|_{L^p(B_R)}\lesssim_\epsilon R^{1+\epsilon}$ for $p\ge 3$.

 Lower bounds for $\|E_{S_i}F\|_{L^{p}(B_R)}$ follow as before, and produce the lower bound  $R^{\frac{5}{2p}+\frac14}$ for the right hand side of \eqref{cjfhur}.

\medskip

In the next section, we prove an unconditional result for $\sqrt{R}$-sets that are one dimensional in nature.

\section{An unconditional large ensemble decoupling}
\label{sec3}

Here is the main result in this section.

\begin{te}
	\label{t2}
Let $\sqrt{R}\lesssim N\le R$.	
Let $S_1,\ldots,S_N$ be a partition of $[0,1]^2$ into large ensembles, such that each square $Q\subset [0,1]^2$ with side length $L\ge R^{-1/2}$
intersects at most $O(LN)$ of these sets. Then for each $F:[0,1]^2\to\C$ and $2\le p\le 3$ we have
$$\|EF\|_{L^p(B_R)}\lesssim_{\epsilon}R^\epsilon N^{\frac12-\frac1p}(\sum_{i=1}^N\|E_{S_i}F\|_{L^p(B_R)}^p)^{1/p}.$$

\end{te}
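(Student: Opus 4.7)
The plan is an induction on the scale $R$ driven by canonical $\ell^{2}$ decoupling at the intermediate scale $L=1/\sqrt{N}$, combined with parabolic rescaling.

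The assumption $\sqrt{R}\le N\le R$ is equivalent to $R^{-1/2}\le L\le 1$, so the hypothesis applies at scale $L$ and guarantees that every $L$-square $Q\subset[0,1]^{2}$ meets at most $O(LN)=O(\sqrt{N})$ of the sets $S_{i}$. Applying Theorem~\ref{candec} at scale $R$ into the squares $Q$ yields
\[
\|EF\|_{L^{p}(B_{R})}\lesssim_{\epsilon} R^{\epsilon}\Big(\sum_{Q}\|E_{Q}F\|_{L^{p}(B_{R})}^{2}\Big)^{1/2}.
\]
Parabolic rescaling of each $Q$ to $[0,1]^{2}$ converts the scale-$R$ problem on $Q$ into a scale-$\widetilde R=R/N$ problem on $[0,1]^{2}$, turning the restricted partition $\{S_{i}\cap Q\}_{i}$ into a partition by $\widetilde N\le O(\sqrt{N})$ large ensembles that inherits the 1D hypothesis at the new scale $\widetilde R$. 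The condition $\widetilde N\ge\sqrt{\widetilde R}$ is exactly $N\ge\sqrt{R}$, so the inductive hypothesis is applicable and gives
\[
\|E_{Q}F\|_{p}^{p}\lesssim_{\epsilon}(R/N)^{\epsilon}(\sqrt{N})^{p/2-1}\sum_{i}\|E_{S_{i}\cap Q}F\|_{p}^{p}.
\]

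Raising the last display to the $2/p$-th power, using the subadditivity $(\sum_{i}a_{i})^{2/p}\le\sum_{i}a_{i}^{2/p}$ valid for $2/p\le 1$, and summing over $Q$ produces
\[
\|EF\|_{p}^{2}\lesssim R^{O(\epsilon)}(\sqrt{N})^{1-2/p}\sum_{i}\sum_{Q}\|E_{S_{i}\cap Q}F\|_{p}^{2}.
\]
To close the induction I would bound $\sum_{Q}\|E_{S_{i}\cap Q}F\|_{p}^{2}$ by $\|E_{S_{i}}F\|_{p}^{2}$ up to $R^{\epsilon}$, which is a reverse $\ell^{2}$ decoupling for $E_{S_{i}}F$ into the $L$-squares, and then use Hölder's inequality on the $N$-fold sum in $i$,
\[
\Big(\sum_{i}\|E_{S_{i}}F\|_{p}^{2}\Big)^{1/2}\le N^{1/2-1/p}\Big(\sum_{i}\|E_{S_{i}}F\|_{p}^{p}\Big)^{1/p},
\]
which produces the target factor $N^{1/2-1/p}$. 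The residual factor $(\sqrt{N})^{1/2-1/p}$ produced by the induction must then be absorbed by fine-tuning the choice of $L$ (or replacing the plain subadditivity step by a more balanced Hölder inequality at the $Q$-level) so that the exponent accounting closes at $N^{1/2-1/p}$ rather than a larger power.

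The main obstacle is the reverse $\ell^{2}$ step. It fails for a generic $S_{i}$: random-phase wave packet computations show a polynomial loss, so the 1D hypothesis must be used globally rather than per-$S_{i}$. The natural device is a double-counting argument based on the symmetric incidence bound $\#\{i:S_{i}\cap Q\ne\emptyset\}\lesssim LN$, coupled with a dyadic pigeonhole on wave packet amplitudes that redistributes the per-$S_{i}$ losses across different $i$'s. Making this combinatorial redistribution quantitatively match the inductive exponent balance — simultaneously with choosing $L$ — is where the technical heart of the proof lies.
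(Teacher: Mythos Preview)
Your outline has two genuine gaps, both of which you flag yourself but do not resolve, and the paper's argument shows that the correct route is rather different.

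First, the ``reverse $\ell^{2}$ decoupling'' step
\[
\sum_{Q}\|E_{S_i\cap Q}F\|_{L^p(B_R)}^{2}\lesssim \|E_{S_i}F\|_{L^p(B_R)}^{2}
\]
is simply false for $p>2$; this is the reverse of the decoupling inequality, and random-phase examples give a polynomial loss, exactly as you observe. What \emph{is} true is the $\ell^{p}$ recoupling
\[
\sum_{Q}\|E_{S_i\cap Q}F\|_{L^p(B_R)}^{p}\lesssim \|E_{S_i}F\|_{L^p(B_R)}^{p},
\]
a trivial consequence of the $\ell^2(L^2)$--$\ell^\infty(L^\infty)$ interpolation (the paper states this as a separate lemma). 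The mismatch between the $\ell^2$ sum over $Q$ produced by canonical decoupling and the $\ell^p$ sum required by recoupling is not a technicality; it is the whole difficulty, and no ``combinatorial redistribution'' will bridge it. Second, even granting the false step, your exponent tally gives $N^{(3/2)(1/2-1/p)}$ rather than $N^{1/2-1/p}$; tweaking $L$ cannot repair this, since any intermediate-scale choice of $L$ feeds the same overshoot back into the induction.

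The paper avoids both problems by \emph{not} using canonical $\ell^2$ decoupling at an intermediate scale. Instead it runs a Bourgain--Guth broad/narrow dichotomy at a fixed small scale $1/K$, $K=O(1)$. The broad (transverse) part is handled directly by the trilinear reverse square function estimate, which already gives the $\ell^2$ bound for arbitrary large ensembles. The narrow part lives in a $1\times 1/K$ strip, where one applies \emph{small cap $\ell^p(L^p)$ decoupling for the parabola}: this produces an $\ell^p$ sum over the $1/K$-squares $\alpha$ in the strip, with exactly the factor $K^{p/2-1}$. After parabolic rescaling of each $\alpha$, the one-dimensional hypothesis on the partition says $\alpha$ meets $O(N/K)$ of the $S_i$, the induction hypothesis at scale $R/K^2$ supplies $(N/K)^{p/2-1}$, and the two factors combine to $N^{p/2-1}$ with no residue. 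The pieces $E_{S_i\cap\alpha}F$ are then reassembled into $E_{S_i}F$ via $\ell^p$ recoupling, which is lossless. The point is that the one-dimensionality of the narrow strip is what makes small cap $\ell^p$ decoupling available with the correct constant, and working in $\ell^p$ throughout the inductive step is what makes recoupling cost-free.
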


The proof relies on recoupling. We formulate it in a way that fits our needs here.
\begin{te}[Recoupling]
Assume  $F$ is supported on pairwise  disjoint squares $\alpha_1, \ldots,\alpha_M$ with  side length $1/K$. Then for each ball $B$ with radius at least $K$ and for each $p\ge 2$ we have
$$ (\sum_{i=1}^M\|E_{\alpha_i}F\|_{L^p(B)}^p)^{1/p}\lesssim \|EF\|_{L^p(B)}.$$
\end{te}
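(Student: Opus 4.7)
The plan is to leverage the fact that, after smoothing by a weight adapted to $B$, the functions $E_{\alpha_i}F$ have essentially disjoint Fourier supports, and then appeal to Rubio de Francia's square function estimate combined with the trivial inclusion $\ell^p\hookrightarrow \ell^2$ for $p\ge 2$. The argument is morally identical to the proof of Theorem \ref{condit}, run at scale $1/K$ instead of $1/\sqrt{R}$.

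First I would pass from $L^p(B)$ to a weighted $L^p(\R^3)$. Let $\eta_B$ be a Schwartz approximation of $1_B$ whose Fourier transform is supported in a ball of radius $O(1/K)$; this is possible precisely because $\mathrm{rad}(B)\ge K$. Each $\eta_B E_{\alpha_i}F$ then has Fourier transform supported in an $O(1/K)$-neighborhood of the lift of $\alpha_i$ to $\P^2$. Since $\alpha_i$ is a $1/K$-square and $\P^2$ has bounded curvature on $[0,1]^2$, this lift has all three spatial extents $O(1/K)$, so the Fourier support sits inside an axis-aligned cube $\tau_i$ of side $\lesssim 1/K$. The disjointness of the $\alpha_i$ as $1/K$-squares forces the family $\{\tau_i\}$ to be finitely overlapping in $\R^3$: a point can belong to $\tau_i$ only if its horizontal projection lies in the $O(1/K)$-enlargement of $\alpha_i$, and each planar point lies in $O(1)$ such enlargements.

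Next, I would apply Rubio de Francia's square function inequality to this finitely overlapping family of $1/K$-cubes (valid for all $p\ge 2$):
$$\left\|\left(\sum_{i=1}^M |\eta_B E_{\alpha_i}F|^2\right)^{1/2}\right\|_{L^p(\R^3)} \lesssim \left\|\sum_{i=1}^M \eta_B E_{\alpha_i}F\right\|_{L^p(\R^3)} = \|\eta_B EF\|_{L^p(\R^3)} \approx \|EF\|_{L^p(B)}.$$
The elementary pointwise inequality $\|\cdot\|_{\ell^p(i)} \le \|\cdot\|_{\ell^2(i)}$ for $p\ge 2$ yields
$$\sum_{i=1}^M \|E_{\alpha_i}F\|_{L^p(B)}^p \approx \int_{\R^3} \sum_{i=1}^M |\eta_B E_{\alpha_i}F|^p \le \int_{\R^3}\left(\sum_{i=1}^M |\eta_B E_{\alpha_i}F|^2\right)^{p/2},$$
and combining these two displays and taking $p$-th roots gives the claimed bound.

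I do not anticipate a serious obstacle: the only nontrivial input is Rubio de Francia, already deployed in Theorem \ref{condit}. The hypothesis $\mathrm{rad}(B)\ge K$ is used precisely so that the Fourier blur from $\eta_B$ is at most comparable to the separation scale of the $\alpha_i$, keeping the $\tau_i$ finitely overlapping; and the restriction $p\ge 2$ enters both through Rubio de Francia and through $\ell^p\hookrightarrow \ell^2$.
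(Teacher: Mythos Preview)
Your proof is correct and coincides with the second of the two arguments the paper offers: the paper's own proof reads, in its entirety, ``The result is immediate when $p\in\{2,\infty\}$. Use $l^2(L^2)$--$l^\infty(L^\infty)$ interpolation. Alternatively, use Rubio de Francia's square function estimate, as in the proof of Theorem \ref{condit}.'' You have written out the Rubio de Francia alternative in detail, and your use of the weight $\eta_B$, the finite overlap of the $\tau_i$, and the pointwise $\ell^p\hookrightarrow\ell^2$ bound are all sound. The paper's primary route is the slightly more elementary interpolation: at $p=2$ the inequality is (near-)orthogonality, at $p=\infty$ it is the $L^\infty$ boundedness of the Fourier projection onto a $1/K$-cube at spatial scale $\ge K$, and complex interpolation gives $l^p(L^p)$ for all $p\in[2,\infty]$---this avoids invoking Rubio de Francia altogether, though it yields the same conclusion.
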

\begin{proof}
	The result is immediate when $p\in\{2,\infty\}$.
Use  $l^2(L^2)-l^\infty(L^\infty)$ interpolation. Alternatively, use Rubio de Francia's square function estimate, as in the proof of Theorem \ref{condit}.

\end{proof}
We also use the following unconditional version of Theorem \ref{condit}. Its proof mirrors the one of Theorem \ref{condit}, using the trilinear reverse square function estimate in \cite{BCT}.
\begin{te}
\label{t1}	
 The trilinear version of Conjecture \ref{c1} holds. More precisely, if $F=F_1+F_2+F_3$ and $F_i$ are supported transversely, then for $2\le p\le 3$
$$\|(\prod_iEF_i)^{1/3}\|_{L^p(B_R)}\lesssim_{\epsilon}R^\epsilon (\sum_{i=1}^N\|E_{S_i}F\|_{L^p(B_R)}^2)^{1/2}.$$
\end{te}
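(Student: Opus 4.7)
The plan is to mirror the proof of Theorem \ref{condit}, with the conjectural inequality \eqref{eeeeeeee1} replaced by its unconditional \emph{trilinear} analogue from \cite{BCT}. The starting point is the trilinear reverse square function estimate: for transversally supported $F_1,F_2,F_3$ and $2\le p\le 3$,
\begin{equation*}
\Bigl\|\Bigl(\prod_{i=1}^3 EF_i\Bigr)^{1/3}\Bigr\|_{L^p(B_R)} \lesssim_\epsilon R^\epsilon \Bigl\|\Bigl(\prod_{i=1}^3 \Bigl(\sum_{q\subset \supp F_i}|E_qF|^2\Bigr)^{1/2}\Bigr)^{1/3}\Bigr\|_{L^p(B_R)}.
\end{equation*}
At $p=2$ this is pure $L^2$-orthogonality, and for $p>2$ it is the standard consequence of the \cite{BCT} multilinear Kakeya/restriction theorem, obtained via a Bourgain--Guth style induction on scales.

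Next, write $H_i(x)=(\sum_{q\subset \supp F_i}|E_qF(x)|^2)^{1/2}$. The elementary bound $(H_1H_2H_3)^{1/3}\le\max_i H_i$, combined with the disjointness of the $\supp F_i$, yields the pointwise estimate
\begin{equation*}
\Bigl(\prod_{i=1}^3 H_i(x)\Bigr)^{1/3} \le \Bigl(\sum_{q\subset \supp F}|E_qF(x)|^2\Bigr)^{1/2}.
\end{equation*}
This reduces matters to controlling the $L^p(B_R)$ norm of the full square function by the desired right-hand side, which is precisely the situation handled in the proof of Theorem \ref{condit}.

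From here I would follow that proof verbatim. Minkowski's inequality applied to $L^{p/2}$ (legitimate because $p\ge 2$) groups the squares according to their containing large ensemble:
\begin{equation*}
\Bigl\|\Bigl(\sum_{q\subset \supp F}|E_qF|^2\Bigr)^{1/2}\Bigr\|_{L^p(B_R)} \le \Bigl(\sum_{j=1}^N \Bigl\|\Bigl(\sum_{q\in S_j}|E_qF|^2\Bigr)^{1/2}\Bigr\|_{L^p(B_R)}^2\Bigr)^{1/2}.
\end{equation*}
For each $j$, inserting a smooth cutoff $\eta_R$ adapted to $B_R$ essentially replaces the $L^p(B_R)$ norm by an $L^p(\R^3)$ norm of a function whose Fourier transform is supported on a finitely overlapping family of $1/\sqrt{R}$-cubes near $\P^2$; Rubio de Francia's square function estimate then upgrades the inner square function to $\|E_{S_j}F\|_{L^p(B_R)}$.

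The main obstacle is of course the first step — identifying and citing the correct trilinear reverse square function inequality in the stated range. The restriction $p\le 3$ in the conclusion is inherited precisely from this step, consistent with the fact that the linear version \eqref{eeeeeeee1} itself is only conjectured in the range $2\le p\le 3$. Everything else is bookkeeping already carried out in the proof of Theorem \ref{condit}.
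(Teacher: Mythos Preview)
Your proposal is correct and follows exactly the approach the paper itself indicates: replace the conjectural linear reverse square function estimate \eqref{eeeeeeee1} by its unconditional trilinear analogue from \cite{BCT}, then repeat the Minkowski and Rubio de Francia steps from the proof of Theorem~\ref{condit}. Your intermediate pointwise bound $(H_1H_2H_3)^{1/3}\le(\sum_{q\subset\supp F}|E_qF|^2)^{1/2}$ is the natural bridge that makes the ``mirroring'' explicit, and the rest is indeed the same bookkeeping as in Theorem~\ref{condit}.
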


\begin{proof}(of Theorem \ref{t2})
	
	Let $C_R$ be the best constant such that
$$\|EF\|_{L^p(B_R)}\le C_R N^{\frac12-\frac1p}(\sum_{i=1}^N\|E_{S_i}F\|_{L^p(B_R)}^p)^{1/p}$$
holds for all $F$, $B_R$,  $N$ and  $S_i$ as in the statement of Theorem \ref{t2}. Fix $K=O(1)$, large enough.
\medskip

We follow a  standard Bourgain--Guth argument, see \cite{BG}. Split $[0,1]^2$ into $1/K$-squares $\alpha$. Consider a finitely overlapping cover of $B_R$ with balls $B_K$. For each $B_K\subset B_R$, there is a flat $1\times 1/K$-strip $S$ (that is, the collection of all $1/K$-squares $\alpha$ whose slight enlargements intersect a given line) inside $[0,1]^2$, and there are transverse (that is, any three points from the three squares determine an area $\gtrsim_K 1$) $1/K$-squares $\alpha_1,\alpha_2,\alpha_3$ such that
$$|EF(x)|\lesssim K^{O(1)}|\prod_{i=1}^3 E_{\alpha_i}F(x)|^{1/3}+|\sum_{\alpha\subset S}E_\alpha F(x)|,\;x\in B_K.$$
Small cap $l^p(L^p)$ decoupling for the parabola (\cite{DGW}) gives
$$\int_{B_K}|\sum_{\alpha\subset S}E_\alpha F|^p\lesssim K^{\frac{p}2-1}\sum_{\alpha\subset S}\int_{B_K}|E_{\alpha}F|^p.$$
Summing up over $B_K$ we get
\begin{equation}
\label{e1}
\|EF\|_{L^p(B_R)}^p\lesssim_{\epsilon}K^{O(1)}R^\epsilon (\sum_{i=1}^N\|E_{S_i}F\|_{L^p(B_R)}^2)^{p/2}+K^{\frac{p}2-1}\sum_{\alpha\subset [0,1]^2}\int_{B_R}|E_{\alpha}F|^p.\end{equation}
The first term is the transverse contribution, estimated using Theorem \ref{t1} and the triangle inequality to rebuild the fractured pieces $S_i$ (no need for recoupling, as further losses in $K$ are harmless for this term).

For the second term, parabolic rescaling gives
$$\int_{B_R}|E_{\alpha}F|^p\lesssim C_{R/K^2}^p(\frac{N}{K})^{\frac{p}{2}-1}\sum_{S_i\cap \alpha\not=\emptyset}\int_{B_R}|E_{S_i\cap\alpha}F|^p.$$
Here $S_i\cap \alpha$ is the union of the $1/\sqrt{R}$-squares lying inside $\alpha$.
Under rescaling $\alpha$ becomes $[0,1]^2$, and the sets $S_i$ that intersect $\alpha$ become unions of  $K/\sqrt{R}$-squares. There are $O(N/K)$ such sets, and they satisfy the induction hypothesis relative to squares $Q$. The ball $B_R$ becomes a tubular region, covered by a finitely overlapping union of balls of radius $R/K^2$.

Thus, the contribution from the second term in \eqref{e1} is
$$\lesssim C_{R/K^2}^pN^{\frac{p}2-1}\sum_{S_i}\int_{B_R}\sum_{\alpha}|E_{\alpha\cap S_i}F|^p,$$
which is dominated via recoupling by
$$C_{R/K^2}^pN^{\frac{p}2-1}\sum_{S_i}\int_{B_R}|E_{S_i}F|^p.$$
We conclude that
$$C_R\le C_\epsilon K^{O(1)}R^\epsilon+C_1C_{R/K^2},$$
with $C_1$ independent of $K,R$.
Iterating this $R\to R/K^2\to R/K^4\ldots$ leads to $C_R\lesssim_\epsilon 1$.

\end{proof}
\medskip

Recoupling is typically lossy, more so than decoupling. The reason it is harmless in this argument is because its loss is compensated by the gain in the Bourgain--Guth argument, that comes from the one dimensional reduction to strips.

One example where Theorem \ref{t2} is applicable with $N\sim \sqrt{R}$ is when each $S_i$ consists of all the squares $q$ intersecting some curve $\gamma_i$. We assume each $Q$ with side length $L$ intersects at most $L\sqrt{R}$ such curves. If these curves are consecutive parallel lines at distance $\sim 1/\sqrt{R}$ from each other, the large ensembles $S_i$ are flat strips. But the curves can also be e.g. concentric circles.

We close this section with a few interesting questions  that the earlier argument seems insufficient to address.
\begin{qu}
Prove $l^2(L^3)$ decoupling into flat strips.
\end{qu}
Conjecture \ref{c2} may be more manageable for flat strips.
\begin{qu}
Prove $l^p(L^p)$ decoupling into flat strips for $3<p\le 10/3.$
\end{qu}

Let us  see why $p$ cannot be larger than $10/3$. Note that this does not follow from Corollary \ref{cosecon}. Indeed, the optimal range of $p$ for a given partition into $\sqrt{R}$-sets is sensitive to the geometry of the partition. For example, \eqref{sdcyre8we t} holds in the range $2\le p\le 4$ when each $P_j$ is an $R^{-1/4}$-square. We test the inequality for flat strips with $F=R1_{[0,1]^2}$. Each $E_qF$ is a single wave packet with magnitude $\sim 1$, so $|E_qF|\approx 1_{T_q}$, for some tube $T_q$ with radius $R^{1/2}$ and length $R$, containing the origin. By constructive interference we have
$|EF(x)|\sim R$ for $|x|\lesssim 1$,
so $$\|EF\|_{L^p(B_R)}\gtrsim R.$$
Using Cordoba's inequality for the parabola above the strip $S_i$, followed by the bush estimate for tubes, we find that for $p\le 4$
$$\|E_{S_i}F\|_{L^p(B_R)}\lesssim_\epsilon R^\epsilon\|(\sum_{q\subset S_i}|E_qF|^2)^{1/2}\|_{L^p(B_R)}\sim R^\epsilon\|\sum_{q\in S_i}1_{T_q}\|_{p/2}^{1/2}\sim R^{\epsilon} \|\sum_{q\in S_i}1_{T_q}\|_{1}^{1/p}\sim R^{\frac5{2p}+\epsilon}.$$
Apart from $R^\epsilon$, this computation is sharp.
Then \eqref{sdcyre8we t} forces that
$$R^p\lesssim_\epsilon R^\epsilon(\sqrt{R})^{p/2}R^{5/2},$$
or $p\le 10/3$.
\medskip

The sets covered by Theorem \ref{t2} are ``one-dimensional" in nature. The following spread-out case seems to lie at the opposite end of the spectrum.

Split $[0,1]^2$ into $R^{1/2}$ squares $Q$ with side length $R^{-1/4}$.   Each $Q$ is the union of $R^{1/2}$ canonical squares $q$. We choose each $S_i$ to contain   exactly one $q$ from each $Q$. This is connected to Theorem \ref{sparse}.
\begin{qu}
Prove $l^3(L^3)$ decoupling for these large ensembles.	
\end{qu}		
The following is worth noticing, as it shows that the contribution from each large ensemble is ``computable". As a consequence of Theorem \ref{candec} and Rubio de Francia's square function inequality, we have for $2\le p\le 4$ and each ball $B_{\sqrt{R}}$
$$
	\|E_{S_i}F\|_{L^p(B_{\sqrt{R}})}\approx \|(\sum_{q\in S_i}|E_qF|^{2})^{1/2}\|_{L^p(B_{\sqrt{R}})}.
$$

\section{Tight decoupling}

Let $p\ge 2$  and let $R$ be a large number. To motivate the forthcoming discussion, we ask how many points $\xi_1,\ldots,\xi_N\in \R^d$ with $|\xi_n|\lesssim 1$ can we find, so that we have $L^p$ square root cancellation on the ball $B(0,R)=\{x\in\R^d:\;|x|\le R\}$ (or equivalently, on the square $[0,R]^d$), namely
\begin{equation}
\label{sqrlp}
\frac{1}{|B(0,R)|}\int_{B(0,R)}|\sum_{n=1}^Na_n e(\xi_n\cdot x)|^pdx\lesssim \|a_n\|_{l^2}^p
\end{equation}
for each $a_n\in \C$. We ask that the implicit constant in \eqref{sqrlp} is independent of $R$. If \eqref{sqrlp} holds for some $p$, it will also hold for smaller indices, due to H\"older's inequality.

It is easy to see that the upper bound $N\lesssim R^{\frac{2d}{p}}$ is necessary, by considering  constant weights $a_n=1$. This is due to constructive interference near the origin
$$|\sum_{n=1}^Ne(\xi_n\cdot x)|\sim N,\;\;\text{for }|x|<1/100,$$
leading to the inequality
$$\frac{N^p}{R^d}\lesssim N^{p/2}.$$
Interestingly, there is a choice (in fact many) of $N\sim R^{2d/p}$ points $\xi_n$ such that \eqref{sqrlp} holds. This is a consequence of the $\Lambda(p)$ phenomenon, settled by Bourgain in \cite{Bo}.
\begin{te}
\label{tB}	
For each orthonormal system $\varphi_1,\ldots,\varphi_n$ of functions  with $\|\varphi_i\|_{L^\infty}\le 1$ and each $p>2$, there is a set $S\subset\{1,2,\ldots,n\}$ with size $\sim n^{2/p}$ such that
$$\|\sum_{i\in S}a_i\varphi_i\|_p\lesssim_p(\sum_{i\in S}|a_i|^2)^{1/2}$$
for each $a_i\in\C$. The implicit constant is independent of $n$ and $a_i$. A random subset $S$ with this size will work, with high probability. 	
\end{te}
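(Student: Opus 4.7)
The plan is the random selector method. Set $\delta = c_p n^{2/p-1}$ and let $\xi_1, \ldots, \xi_n$ be i.i.d.\ Bernoulli$(\delta)$ variables, with $S = \{i : \xi_i = 1\}$. Then $\E|S| \sim n^{2/p}$, and by Chernoff $|S| \sim n^{2/p}$ with probability close to $1$. It suffices to show that with probability at least $1/2$ the operator $T_S : a \mapsto \sum_{i \in S} a_i \varphi_i$ has norm $O_p(1)$ from $l^2$ to $L^p$.

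By duality and Cauchy--Schwarz this reduces to controlling the random supremum
$$W := \sup_{\|g\|_{p'} \le 1} \sum_{i=1}^n \xi_i |\langle \varphi_i, g\rangle|^2$$
by a constant with high $\xi$-probability. The first step I would take is a standard symmetrization, replacing $\xi_i$ with $\epsilon_i \xi_i$ for independent Rademachers $\epsilon_i$; the mean term $\delta \sum_i |\langle \varphi_i, g\rangle|^2 \le \delta \|g\|_2^2$ is handled separately by interpolation using $\|\varphi_i\|_\infty \le 1$.

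The core is then to bound the expected supremum of the symmetrized subgaussian process on the unit ball $B_{p'} \subset L^{p'}$, equipped with the random metric $d(g,h)^2 = \sum_i \xi_i |\langle \varphi_i, g - h\rangle|^2$. The natural tool is Dudley's entropy integral (or Talagrand's generic chaining). The main obstacle is the required metric entropy estimate for $B_{p'}$, and this is where the hypothesis $\|\varphi_i\|_\infty \le 1$ enters crucially: it forces the coefficient vector $(\langle \varphi_i, g\rangle)_i$ to lie in a bounded $l^\infty$-ball, while Bessel gives an $l^2$-bound; combining the two via dual Sudakov yields entropy estimates that integrate to an $O(1)$ bound precisely when $\delta \sim n^{2/p-1}$---below this threshold the chaining sum would diverge, which matches the constructive-interference lower bound preceding the theorem.

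For even integers $p = 2k$ one can bypass chaining entirely with a direct moment computation: expand $\E \|\sum_i \xi_i a_i \varphi_i\|_{2k}^{2k}$ as a sum of $2k$-fold integrals of $\varphi$-products, take expectation in $\xi$, and use orthonormality to kill the off-diagonal terms while bounding the survivors via $\|\varphi_i\|_\infty \le 1$. General $p$ then follows by interpolation (or truncation to an even integer above $p$). This latter route is combinatorially heavier but technically simpler; the main obstacle shifts from chaining to organizing the multi-index sum so that the probabilistic gain $\delta^k$ beats its combinatorial complexity exactly at the threshold $|S| \sim n^{2/p}$.
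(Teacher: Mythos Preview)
The paper does not prove this statement; it is quoted as Bourgain's $\Lambda(p)$ theorem and attributed to \cite{Bo}. There is therefore no in-paper proof to compare your proposal against.

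That said, a few remarks on your sketch as an outline of Bourgain's argument. The overall architecture (random selectors, duality to a random sup over the $L^{p'}$ ball, symmetrization, then chaining) is correct in spirit, but the sentence ``combining the two via dual Sudakov yields entropy estimates that integrate to an $O(1)$ bound'' hides essentially the entire difficulty of the theorem: Bourgain's proof requires a delicate iterative decoupling of the entropy bound, not a single application of dual Sudakov, and this is where most of the work in \cite{Bo} lies. Your alternative route for even integers $p=2k$ is also not as clean as stated: orthonormality does not ``kill the off-diagonal terms'' in the expansion of $\|\sum_i \xi_i a_i \varphi_i\|_{2k}^{2k}$, since the integrand is a $2k$-fold product of $\varphi_i$'s rather than a bilinear pairing; and the final claim that ``general $p$ then follows by interpolation'' is false, because the set $S$ depends on $p$ and a $\Lambda(2k)$ set need not be a $\Lambda(p)$ set for $p<2k$ with the required cardinality $\sim n^{2/p}$.
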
	
Consider the functions $\varphi_{\n}(x_1,\ldots,x_d)=e({n_1x_1+\ldots+n_dx_d})$ with $n_i$ positive integers smaller than $R$. They are $L^2$ orthogonal on $[0,1]^d$, and there are $\sim R^d$ of them.  Theorem \ref{tB} shows the existence of $\sim (R^d)^{2/p}$ such functions satisfying
$$\int_{[0,1]^d}|\sum_{\n\in S}a_\n e(\n\cdot x)|^pdx\lesssim \|a_\n\|_{l^2}^p.$$
Rescaling, it follows that taking $\xi_n$ to be the points in $\{\frac{\n}{R}:\;\n\in S\}$ satisfies \eqref{sqrlp}.
\smallskip

For generic $S$, the points $\frac{\n}{R}$ are spread out somewhat uniformly in the cube $[0,1]^d$. We will be interested in the case when the points lie on a prescribed manifold. Some degree of curvature will be needed, as the points cannot lie inside any given hyperplane $\H$. Indeed, if $\xi_n\in\H$ and $a_n=1$, we have constructive interference on the bigger set $E=\{x\in B(0,R):\;\dist(x,\H^\perp)\le 1/100\}$ of volume $\sim R$. This forces the more severe restriction $N\lesssim R^{\frac{2(d-1)}{p}}$.
\smallskip

If \eqref{sqrlp} is satisfied for a collection of $\xi_n$ with maximal size $N\sim R^{2d/p}$, we will refer to \eqref{sqrlp} as {\em tight decoupling.} Slightly weaker versions may also be considered, where $R^\epsilon$ losses are tolerated.

\medskip

Here is our main theorem in this section. To keep things simpler, we will restrict attention to the constant coefficient case, which is by far the most relevant for applications. But see also Question \ref{gen}.

\begin{te}[Tight decoupling for frequencies in thin sets]
	\label{ttight}	 Let $p\ge 2$ and let $E\subset \R^d$ be a compact set.
	Assume that there is a Borel probability  measure $d\sigma$ with $\supp(d\sigma)\subset E$ such that
\begin{equation}
\label{decay}
\int_{\R^d}|\widehat{d\sigma}(x)|^{p}dx<\infty.\end{equation}	
We also assume that for each $\epsilon$ there is a partition of $\supp(d\sigma)$ into Borel sets with equal $d\sigma$ measure and diameter less than $\epsilon$.

Then for each $R\gg 1$ there is a  collection of points $\Pc_R\subset E$ with cardinality $\sim R^{\frac{2d}{p}}$ such that we have square root cancellation on $B(0,R)$ in $L^p$
	 for  $a_\xi\equiv 1$
	\begin{equation}
\label{iucyf8rfodl;fk}
\frac{1}{|B(0,R)|}\int_{B(0,R)}|\sum_{\xi\in\Pc_R}a_\xi e(\xi\cdot x)|^pdx\lesssim \|a_\xi\|_{l^2}^p.\end{equation}

Conversely, if such $\Pc_R\subset E$ exists for a sequence of $R=R_k\to\infty$, then $E$ must support a probability measure $d\sigma$ satisfying $\eqref{decay}$.
\end{te}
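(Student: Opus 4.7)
The plan is to realize $\Pc_R$ as a deterministic discretization of $d\sigma$. Set $N=\lfloor R^{2d/p}\rfloor$ and apply the hypothesis with $\epsilon:=R^{-1-d/p}$ to obtain a partition $\supp(d\sigma)=A_1\sqcup\cdots\sqcup A_N$ with $\sigma(A_j)=1/N$ and $\diam(A_j)<\epsilon$; choose any $\xi_j\in A_j$ and set $\Pc_R=\{\xi_1,\ldots,\xi_N\}$. The sum $f(x)=\sum_j e(\xi_j\cdot x)$ is then a Riemann-type approximation of $N\widehat{d\sigma}(-x)$: since $|e(\xi_j\cdot x)-e(\xi\cdot x)|\le 2\pi|x||\xi_j-\xi|$ for $\xi\in A_j$, summing over $j$ yields $|f(x)-N\widehat{d\sigma}(-x)|\lesssim NR\epsilon=R^{d/p}$ uniformly in $x\in B(0,R)$. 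Raising to the $p$th power, splitting, and integrating gives
$$\int_{B(0,R)}|f|^p\,dx\lesssim N^p\int_{\R^d}|\widehat{d\sigma}|^p\,dx+(NR\epsilon)^p|B(0,R)|\lesssim R^{2d},$$
where the first term uses \eqref{decay} and both summands evaluate to $R^{2d}=N^{p/2}|B(0,R)|$; this is exactly \eqref{iucyf8rfodl;fk} with $a_\xi\equiv 1$.

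\textbf{Converse direction.} Given $\Pc_{R_k}\subset E$ of cardinality $N_k\sim R_k^{2d/p}$, form the empirical probability measures $\sigma_k=\frac{1}{N_k}\sum_{\xi\in\Pc_{R_k}}\delta_\xi$ on the compact set $E$. Hypothesis \eqref{iucyf8rfodl;fk} rewrites as
$$\int_{B(0,R_k)}|\widehat{\sigma_k}|^p\,dx\lesssim\frac{R_k^d}{N_k^{p/2}}\lesssim 1.$$
By Prokhorov (equivalently, Banach--Alaoglu in $C(E)^*$), a subsequence $\sigma_{k_j}$ converges weakly-$*$ to a Borel probability measure $\sigma$ supported in $E$. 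Pointwise $\widehat{\sigma_{k_j}}(x)\to\widehat{\sigma}(x)$ for each $x\in\R^d$, because $\xi\mapsto e(-\xi\cdot x)$ is a bounded continuous test function on $E$. For any fixed $R$ and $j$ large enough that $R_{k_j}\ge R$, Fatou gives
$$\int_{B(0,R)}|\widehat{\sigma}|^p\,dx\le\liminf_j\int_{B(0,R)}|\widehat{\sigma_{k_j}}|^p\,dx\lesssim 1,$$
and monotone convergence in $R$ upgrades this to $\widehat{\sigma}\in L^p(\R^d)$.

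\textbf{The main obstacle.} Both directions are short once set up. The only genuinely substantive point in the forward direction is calibrating $\epsilon$ so that the linear-in-$|x|$ discretization error, once raised to the $p$th power and integrated over a ball of volume $R^d$, matches the main $N^p\|\widehat{d\sigma}\|_p^p$ contribution; the balance forces $\epsilon=R^{-1-d/p}$, and this is precisely why the hypothesis of arbitrarily fine equal-measure partitions is invoked. In the converse, the only subtlety is that the $L^p$ control on $\widehat{\sigma_k}$ sits only on $B(0,R_k)$, so Fatou must be applied on each fixed ball first and the monotone limit in $R$ taken afterward.
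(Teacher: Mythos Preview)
Your converse direction is correct and matches the paper's argument (the paper phrases the limit step via dominated convergence on each fixed ball, you use Fatou; either works).

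The forward direction, however, has a genuine gap. You write ``Set $N=\lfloor R^{2d/p}\rfloor$ and apply the hypothesis with $\epsilon:=R^{-1-d/p}$ to obtain a partition $\supp(d\sigma)=A_1\sqcup\cdots\sqcup A_N$ \ldots''. But the hypothesis does not let you prescribe the number of pieces: it only asserts that for each $\epsilon$ there \emph{exists} a partition into equal-measure sets of diameter $<\epsilon$, and the cardinality of that partition is whatever it turns out to be. In typical examples it is far larger than $R^{2d/p}$. For instance, take $d\sigma$ to be normalized arclength on $\S^1\subset\R^2$ and $p=4$. Then $R^{2d/p}=R$, while any equal-measure partition of $\S^1$ into arcs of diameter $<R^{-1-d/p}=R^{-3/2}$ must have at least $\sim R^{3/2}$ pieces. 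More generally, an $s$-dimensional Ahlfors regular measure forces $N\sim \epsilon^{-s}=R^{s(1+d/p)}$, which equals $R^{2d/p}$ only for the exceptional value $s=\tfrac{2d}{p+d}$. So your deterministic discretization produces too many points, and the first term $N^p\int|\widehat{d\sigma}|^p$ in your bound is then much larger than $N^{p/2}|B(0,R)|$.

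The paper resolves this by accepting whatever large $N$ the partition yields, proving $\int_{B(0,R)}|\sum_{n=1}^N e(\xi_n\cdot x)|^p\,dx\lesssim N^p$ exactly as you do, and then \emph{randomly thinning} the $\xi_n$: one introduces i.i.d.\ $\{0,1\}$-valued selectors $f_n$ with mean $\delta$ chosen so that $N\delta\sim R^{2d/p}$, and uses the Marcinkiewicz--Zygmund and Rosenthal inequalities to show that, with positive probability, the selected $\sim N\delta$ points satisfy \eqref{iucyf8rfodl;fk}. This random selection step is the missing idea; without it the argument does not close.
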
	
Inequality \eqref{decay} forces the support of $d\sigma$ to have Hausdorff dimension at least $2d/p$.

\begin{ex}[\textbf{Hypersurfaces}]
\end{ex}
Perhaps the most interesting class of examples comes from the case when $d\sigma$ is supported on a smooth hypersurface $\Sigma$. If we assume $\Sigma$ has nowhere zero Gaussian curvature, then it is known that the surface measure $d\sigma$ satisfies \eqref{decay} for each $p>\frac{2d}{d-1}$, since in fact
$|\widehat{d\sigma}(x)|\lesssim (1+|x|)^{-\frac{d-1}{2}}$. Theorem \ref{ttight} shows that tight decoupling is possible in this range.

On the other hand,  the Hausdorff dimension of $\Sigma$ is $d-1$, making it a Salem set. No measure supported on $\Sigma$ can satisfy \eqref {decay} with $p<\frac{2d}{d-1}$, so tight decoupling is impossible in this range.
\medskip

The case $p=\frac{4d}{d-1}$ is particularly revealing.
When the points $\xi_n\in\Sigma$ are $1/\sqrt{R}$-separated, it was proved in \cite{BD}, as a consequence of decoupling,  that \eqref{sqrlp} holds for $p\le \frac{2(d+1)}{d-1}$ for all $a_n$, albeit with mild $R^\epsilon$ losses. At this level of generality, this exponent is best possible.  To see this, assume $\Sigma$ is parametrized by $(x_1,\ldots,x_{d-1},\Phi(x_1,\ldots,x_{d-1}))$. Let $\xi_\n=(\frac{\n}{\sqrt{R}},\Phi(\frac{\n}{\sqrt{R}}))\in\Sigma$, with $\n\in \Z^{d-1}\cap B_{d-1}(0,\sqrt{R})$. These points are $1/\sqrt{R}$-separated, and there are $\sim R^{\frac{d-1}{2}}$ many of them. The new phenomenon is that the associated exponential sums have constructive interference in many regions of $B(0,R)$, not just near the origin. More precisely,
$$|\sum_{\n}e(\xi_\n\cdot x)|\sim R^{\frac{d-1}{2}}$$
whenever $|x_i-l_i\sqrt{R}|\le \frac1{100d}$ for each $1\le i\le d-1$ and some integer $|l_i|\lesssim \sqrt{R}$, and $|x_d|\le 1/100$. This set has measure $\sim R^{-\frac{d-1}{2}}$, which can be seen to force the restriction $p\le \frac{2(d+1)}{d-1}$ in \eqref{sqrlp}.

As a consequence of Theorem \ref{ttight} with $p=\frac{4d}{d-1}$, we find that each $\Sigma$ as above supports a collection $\Pc_R$ of roughly $R^{\frac{d-1}{2}}$ points $\xi$, such that we have square root cancellation in $L^{\frac{4d}{d-1}}$
$$
\frac{1}{|B(0,R)|}\int_{B(0,R)}|\sum_{\xi\in\Pc_R} e(\xi\cdot x)|^{\frac{4d}{d-1}}dx\lesssim R^{d}.
$$	
There is no guarantee that these points are $1/\sqrt{R}$-separated, but our proof of Theorem \ref{ttight} will show that this can be arranged to be true for a significant fraction of all pairs of points.

The examples provided by our proof of Theorem \ref{ttight} are non-deterministic, they arise via random selections. We are not aware of any deterministic example that is known to satisfy the case $p=\frac{4d}{d-1}$ of the theorem. However, we recall two conjectures that fit into this category. This time, we allow $R^\epsilon$ losses.

The first one is about lattice points on the sphere. Consider the typical eigenfunction of the Laplacian on $\T^d$, given by
$$S_{N,d}(\x)=\sum_{\n\in\sqrt{N}\S^{d-1}\cap \Z^d}a_{\n}e(\n\cdot \x).$$
It has been conjectured, see e.g. \cite{Bo112}, that if $d\ge 3$ and $p\le \frac{2d}{d-2}$
\begin{equation}
\label{consfere}
\int_{[0,1]^d}|S_{N,d}(x)|^{p}dx\lesssim_\epsilon N^\epsilon \|a_\n\|_{l^2}^p.\end{equation}
This inequality, sometimes referred to as the discrete Restriction conjecture for the sphere,  has  been verified in \cite{BD} in the smaller  range $p\le \frac{2(d+1)}{d-1}$, that is accessible via decoupling.

Let $d=3$, so  $\frac{2d}{d-2}=6$ coincides with $\frac{4d}{d-1}$ . Consider an integer $N$ for which $\sqrt{N}\S^{2}\cap \Z^3$ contains $\approx \sqrt{N}$ points (this is the case for infinitely many $N$). The rescaled collection $\frac1{\sqrt{N}}(\sqrt{N}\S^{2}\cap \Z^3)$ is conjectured to satisfy the tight decoupling \eqref{iucyf8rfodl;fk} with $R=\sqrt{N}$, $p=6$ and $R^\epsilon$ losses.

As a second example, we recall a conjecture from \cite{DG}. Consider the set of lattice points in the thin annulus
$$\Ac_{R,R^{-1/2}}=\{\n\in \Z^2:|\n-R|\le R^{-1/2}\}.$$
Moreover, we may assume that there are $\approx R^{1/2}$ such points. The rescaled points $\frac1{\sqrt{R}}\Ac_{R,R^{-1/2}}$ do not belong to the circle $\S^1$, but rather to a thin neighborhood of it. It is conjectured in \cite{DG} (Conjecture B) that
$$\int_{[0,1]^2} |\sum_{\n\in \Z^2:|\n-R|\le R^{-1/2}}e(\n\cdot x)|^8dx\lesssim_\epsilon R^{2+\epsilon}.$$
Thus, the rescaled points are expected to satisfy the tight decoupling \eqref{iucyf8rfodl;fk}
with $p=8$ (and $R^\epsilon$ losses). The arbitrary coefficient version of this inequality is false, as $\Ac_{R,R^{-1/2}}$ may contain many points in arithmetic progression.

When $d\ge 4$, \eqref{consfere} provides further candidates for tight decoupling, but the value of $\frac{2d}{d-2}$ is now smaller than $\frac{4d}{d-1}$. For infinitely many $N$, the set $\sqrt{N}\S^{d-1}\cap \Z^d$ contains $\approx N^{\frac{d-2}{2}}$ points. This matches $R^{2d/p}$ with $R=\sqrt{N}$ and $p=\frac{2d}{d-2}$. Interestingly, \eqref{consfere} was verified for $d=4$ and $a_\n=1$ in \cite{BD1}, using the Siegel mass formula. We are not aware of any other deterministic example for which tight decoupling is known to hold for frequencies localized on a hypersurface. This example gives the heuristics for why  \eqref{consfere} is extremely difficult. Tight decoupling is the highest degree on square root cancellation, and verifying it is expected to involve high-level number theory.

\begin{re}
In \cite{Ry} (see also Lemma 6 in \cite{LW} and the main result in \cite{K}), a probability measure $\nu$ supported on $\P^1$ is shown to exist for each $0<\alpha<1$ such that for each $\xi\in \supp(\nu)$ and $0<r\lesssim 1$
$$ r^{\alpha+\epsilon}\lesssim_\epsilon\nu(B(\xi,r))\lesssim_\epsilon r^{\alpha-\epsilon}$$
and
\begin{equation}
\label{kjrifurfigto[pgothob}
|\widehat{\nu}(x)|\lesssim_{\epsilon}(1+|x|)^{-\alpha/2+\epsilon}. \end{equation}
We use this with $\alpha=1/2$. Consider $R^{\frac12-\epsilon}\lesssim_\epsilon N_R\lesssim_\epsilon R^{\frac12+\epsilon}$ pairwise disjoint arcs $J$ on $\P^1$, with length $1/R$ and such that
$$R^{-\frac12-\epsilon}\lesssim_\epsilon \nu(J)\lesssim_\epsilon R^{-\frac12+\epsilon}.$$
Let $\psi_R(x)=e^{-(\frac{|x|}R)^2}$, so $\widehat{\psi_R}(\xi)=R^2e^{-(R{|\xi|})^2}$. Let $c_J$ be the center of $J$. Let $\eta$ be smooth, compactly supported, positive,  with $|\widehat{\eta}(x)|\ge 1$ for $|x|\le 1$. Note that for each $\xi\in\R^2$
$$0\le \sum_{J}\eta(R(\xi-c_J))\lesssim_\epsilon R^{-3/2+\epsilon}(d\nu)*\widehat{\psi_R}(\xi).$$
Since $8$ is an even integer, this implies that, taking Fourier transforms on both sides
$$\|{\frac1{R^2}\sum_{J}\widehat{\eta}(\frac{x}R)e(c_J\cdot x)}\|_{L^8(\R^2)}\lesssim_\epsilon R^{-3/2+\epsilon}\|\widehat{\nu}\psi_R\|_{L^8(\R^2)}\lesssim_\epsilon R^{-\frac32+\epsilon}.$$
The last inequality follows from \eqref{kjrifurfigto[pgothob}. We find that
$$\|\sum_{J}e(c_J\cdot x)\|_{L^8(B_R)}\lesssim_\epsilon R^{\frac12+\epsilon}.$$
This is tight decoupling with $R^\epsilon$ loss. The fact that $8$ is even integer is crucial to this argument.

This construction is also non deterministic, as $\nu$ is a random measure. The frequency points $c_J$ belong to a fractal. This is very different from our main construction, where $\xi_n$ are well spaced, with high probability.
\end{re}
We close the discussion about hypersurfaces with the following question, open even for $\P_1$.
\begin{qu}
\label{gen}	
Let $\frac{2(d+1)}{d-1}\le p\le \frac{4d}{d-1}$. Given a hypersurface $\Sigma\subset \R^d$ with positive curvatures (e.g. the sphere), and $R\gg 1$, is there a set $\Pc_R\subset \Sigma$ with $\sim R^{2p/d}$ points such that $$\frac{1}{|B(0,R)|}\int_{B(0,R)}|\sum_{\xi\in\Pc_R}a_\xi e(\xi\cdot x)|^pdx\lesssim \|a_\xi\|_{l^2}^p$$
holds for arbitrary $a_\xi$? How about with $R^\epsilon$ losses, when $p>\frac{2(d+1)}{d-1}$?	
\end{qu}
The additional restriction $p\ge \frac{2(d+1)}{d-1}$ is forced
by lower dimensional concentration. There is a cap with diameter $\sim R^{-1/2}$ on $\Sigma$ containing $N\sim R^{\frac{2d}{p}-\frac{d-1}{2}}$ of the points in $\Pc_R$. Test the inequality with $a_\xi=1$ on this set, and zero otherwise. The points lie inside a box with dimensions $R^{-1/2}$ ($d-1$ times) and $1/R$ (once). There is constructive interference on the box centered at the origin  with dimensions $R^{1/2}$ ($d-1$ times) and $R$ (once). Thus
$$R^{-d}R^{\frac{d-1}2+1}N^p\lesssim N^{p/2}, $$
which means $p\ge \frac{2(d+1)}{d-1}$.

\begin{ex}[\textbf{Curves}]
\end{ex}	
	Consider a curve with nonzero torsion in $\R^d$, such as the moment curve
$$\Gamma^d=\{(t,\ldots,t^d):\;0\le t\le 1\}.$$
The main result in \cite{GIZZ} proves that $\Gamma^d$ does not support any measure with Fourier transform in $L^p$ for   $p<\frac{d^2+d+2}{2}$. The second part of Theorem \ref{ttight} concludes that it is impossible to achieve tight decoupling in the range $p<\frac{d^2+d+2}{2}$, with frequencies in $\Gamma^d$.

On the other hand, it is known (\cite{ACK}) that the uniform measure on $\Gamma^d$  satisfies
\eqref{decay} for $p>\frac{d^2+d+2}{2}$. When combined with the first part of Theorem \ref{ttight}, we see that tight decoupling is possible in this range.

Let us specialize to  $p=2d^2$. Theorem \eqref{ttight} shows the existence of roughly $R^{1/d}$ points on $\Gamma^d$ such that \eqref{iucyf8rfodl;fk} holds for this $p$. On the other hand it was proved in \cite{BDG} that \eqref{iucyf8rfodl;fk} holds for $p\le d(d+1)$, arbitrary coefficients and arbitrary collections $\Pc_R$ of points on $\Gamma^d$ that are $1/R^{1/d}$-separated. This range follows from decoupling and is sharp under just the separation condition, as it can be seen by considering the rescaled lattice points on $\Gamma^d$ corresponding to $t=n/R^{1/d}$, $1\le n\le R^{1/d}$.
\medskip

We use two classical results. The first one is due to Marcinkiewicz--Zygmund.
\begin{te}
\label{tp1}	
Let $X_1,\ldots,X_N$ be independent, mean zero  random variables belonging to $L^p$ for some $p\ge 1$. Then
$$\int|\sum_{n=1}^NX_n|^{p}\sim_p \int(\sum_{n=1}^N|X_n|^2)^{p/2}.$$
\end{te}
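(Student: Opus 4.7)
The plan is to reduce both directions of Marcinkiewicz--Zygmund to Khintchine's inequality by a standard symmetrization. Introduce independent copies $(X_n')$ of $(X_n)$ and independent Rademacher signs $(\epsilon_n)$, everything mutually independent. Each $X_n - X_n'$ is symmetric, so the family $(X_n - X_n')$ has the same joint distribution as $(\epsilon_n(X_n - X_n'))$. This will let me insert Rademacher signs at will, after which Khintchine's inequality (applied conditionally on the $X$'s) turns $L^p$ norms into square-function expressions.

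For the upper bound, Jensen's inequality applied to the inner expectation in $X_n'$ (legal since $\E X_n' = 0$) gives $\E|\sum_n X_n|^p \le \E|\sum_n(X_n - X_n')|^p$, and the distributional identity above rewrites this as $\E|\sum_n \epsilon_n(X_n - X_n')|^p$. Conditioning on $(X_n, X_n')$ and applying Khintchine turns this into $\E(\sum_n(X_n - X_n')^2)^{p/2}$ up to a $p$-dependent constant. Since $(X_n - X_n')^2 \le 2(X_n^2 + X_n'^2)$, convexity of $t \mapsto t^{p/2}$ when $p \ge 2$ (respectively subadditivity when $1 \le p \le 2$) yields $(\sum_n(X_n - X_n')^2)^{p/2} \lesssim_p (\sum_n X_n^2)^{p/2} + (\sum_n X_n'^2)^{p/2}$; integrating and using $X_n' \stackrel{d}{=} X_n$ closes this direction.

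For the lower bound, first apply Khintchine conditionally on $(X_n)$ to rewrite $\E(\sum_n X_n^2)^{p/2} \sim_p \E|\sum_n \epsilon_n X_n|^p$, so it is enough to control $\E|\sum_n \epsilon_n X_n|^p$ by $\E|\sum_n X_n|^p$. The same Jensen trick applies once more: since $\E_{X'}(\epsilon_n X_n') = 0$, we get $|\sum_n \epsilon_n X_n|^p \le \E_{X'}|\sum_n \epsilon_n(X_n - X_n')|^p$. Taking outer expectation and using $(\epsilon_n(X_n - X_n')) \stackrel{d}{=} (X_n - X_n')$, this bounds $\E|\sum_n \epsilon_n X_n|^p$ by $\E|\sum_n(X_n - X_n')|^p \le 2^p \E|\sum_n X_n|^p$ via the $L^p$ triangle inequality.

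The argument works uniformly in $p \ge 1$, and the case $p = 2$ is trivially an equality by orthogonality. The only delicate point is the lower bound: applying Jensen to the Rademacher-weighted sum $\sum_n \epsilon_n X_n$ rather than to $\sum_n X_n$ itself is what lets the argument run for $1 \le p < 2$, where a direct attempt to compare second moments of $X_n$ and $X_n - X_n'$ via Jensen would fail because $t \mapsto t^{p/2}$ is concave in that range. No ingredient beyond Khintchine and symmetrization is needed.
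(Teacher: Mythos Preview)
Your proof is correct and follows the standard symmetrization route to Marcinkiewicz--Zygmund; each step (Jensen to pass to the symmetrized sum, the distributional identity $(X_n-X_n')\stackrel{d}{=}(\epsilon_n(X_n-X_n'))$, conditional Khintchine, and the triangle inequality to undo the symmetrization) is sound. Note, however, that the paper does not prove this statement at all: it is quoted as a classical result (``The first one is due to Marcinkiewicz--Zygmund'') and used as a black box in the proof of Theorem~\ref{ttight}, so there is no paper proof to compare against. Your argument is exactly the textbook one and would serve perfectly well as the omitted justification.
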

The second one is usually attributed to Rosenthal, see Lemma 1 in \cite{Ro}. It does not require mean zero.
\begin{te}
\label{tp2}		
Let $Y_1,\ldots,Y_N$ be independent,  random variables belonging to $L^q$ for some $q\ge 1$. Then
$$\int|\sum_{n=1}^NY_n|^{q}\lesssim_q\sum_{n=1}^N\int|Y_n|^q+(\sum_{n=1}^N\int|Y_n|)^{q}.$$
\end{te}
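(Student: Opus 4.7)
The plan is to reduce to the mean-zero case and then invoke the Marcinkiewicz--Zygmund inequality (Theorem~\ref{tp1}) to pass to the square function. Writing $\mu_n=\int Y_n$ and $\tilde Y_n=Y_n-\mu_n$, the triangle inequality yields
$$\int\bigl|\sum_n Y_n\bigr|^q\lesssim_q\int\bigl|\sum_n \tilde Y_n\bigr|^q+\bigl|\sum_n\mu_n\bigr|^q,$$
and the second summand is dominated by $(\sum_n\int|Y_n|)^q$, which already matches the target. It therefore suffices to prove the inequality for the mean-zero variables $\tilde Y_n$.

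Applying Theorem~\ref{tp1} reduces the problem to controlling $\int(\sum_n\tilde Y_n^2)^{q/2}$. For $1\le q\le 2$ the exponent $q/2\le 1$ makes $t\mapsto t^{q/2}$ subadditive, so $(\sum_n\tilde Y_n^2)^{q/2}\le\sum_n|\tilde Y_n|^q$ pointwise, and undoing the centering via $|\tilde Y_n|\le|Y_n|+\int|Y_n|$ finishes this range.

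For $q>2$ I would first prove the auxiliary inequality
$$\int\Bigl(\sum_n V_n\Bigr)^r\lesssim_r\sum_n\int V_n^r+\Bigl(\sum_n\int V_n\Bigr)^r$$
for non-negative independent $V_n$ at level $r=q/2>1$, and then specialize to $V_n=\tilde Y_n^2$. Expanding $\int(\sum_n V_n)^r=\sum_n\int V_n(\sum_m V_m)^{r-1}$, the elementary bound $(V_n+\sum_{m\ne n}V_m)^{r-1}\lesssim_r V_n^{r-1}+(\sum_{m\ne n}V_m)^{r-1}$ combined with independence produces the recursion $A_r\lesssim_r B_r+D\,A_{r-1}$, where $A_r=\int(\sum V_n)^r$, $B_r=\sum\int V_n^r$, and $D=\sum\int V_n$. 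One iterates until the level drops into $[0,1]$, where Jensen's inequality gives $A_s\le D^s$; each intermediate product $D^k B_{r-k}$ is in turn dominated by $D^r+B_r$ using the Hölder interpolation $\int V_n^{r-k}\le(\int V_n)^{k/(r-1)}(\int V_n^r)^{(r-k-1)/(r-1)}$, Hölder in $n$, and a final Young step.

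The last task is to replace the resulting $L^2$ moments of $\tilde Y_n$ by the $L^1$ moments demanded by the statement. The key inequality is the Hölder bound
$$\int\tilde Y_n^2\le\Bigl(\int|\tilde Y_n|\Bigr)^{\frac{q-2}{q-1}}\Bigl(\int|\tilde Y_n|^q\Bigr)^{\frac{1}{q-1}},$$
followed by Hölder in $n$ with exponents $\frac{q-1}{q-2}$ and $q-1$, and then Young's inequality with conjugates $\frac{2(q-1)}{q-2}$ and $\frac{2(q-1)}{q}$, which together give
$$\Bigl(\sum_n\int\tilde Y_n^2\Bigr)^{q/2}\lesssim_q\Bigl(\sum_n\int|\tilde Y_n|\Bigr)^q+\sum_n\int|\tilde Y_n|^q.$$
Undoing the centering completes the proof. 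The hardest part is the bookkeeping in the recursion for non-integer $r$; the cleanest shortcut is to take the classical $L^2$-sum version of Rosenthal from \cite{Ro} as a black box and only execute the Hölder--Young conversion, which is the one ingredient genuinely needed to reach the $L^1$-sum formulation stated in the paper.
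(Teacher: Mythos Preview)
The paper does not actually prove Theorem~\ref{tp2}; it is quoted as a classical result, with the attribution ``usually attributed to Rosenthal, see Lemma~1 in \cite{Ro}'' and the remark that it does not require mean zero. So there is no in-paper argument to compare against.

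Your sketch is a correct route to the inequality. The reduction to mean zero, the appeal to Marcinkiewicz--Zygmund, and the treatment of $q\le 2$ by subadditivity are all standard and clean. For $q>2$, your recursion $A_r\lesssim_r B_r+D\,A_{r-1}$ is exactly the device in Rosenthal's original paper for nonnegative variables, and the interpolation-plus-Young cleanup of the intermediate terms $D^jB_{r-j}$ works as you indicate. The final H\"older/Young step converting $(\sum_n\int\tilde Y_n^2)^{q/2}$ into the $L^1$-sum form is also correct; your exponents check out. In short, you have reconstructed (a version of) Rosenthal's proof, whereas the paper simply imports the result. Your closing suggestion---to cite the $L^2$-sum Rosenthal inequality and only perform the H\"older--Young conversion---is in the same spirit as what the paper does, just one step less black-boxed.
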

Let us now prove the first part in Theorem \ref{ttight}.

Consider a partition of $\supp(d\sigma)$ into sets $S_1,\ldots,S_N$ with $d\sigma$ measure equal to $1/N$ and diameter $\lesssim R^{-1-\frac{d}{p}}$. Pick any point $\xi_n\in S_n$. Then, if $|x|<R$
\begin{align*}
|\frac1{N}\sum_{n=1}^Ne(\xi_n\cdot x)-\widehat{d\sigma}(x)|&\le \sum_{n=1}^N|\int_{S_n}(e(\xi_n\cdot x)-e(\xi\cdot x))d\sigma(\xi)|\\&\le \frac1N\sum_{n=1}^N\max_{\xi\in S_n}|e(\xi_n\cdot x)-e(\xi\cdot x)|
\\&\lesssim \max_n\max_{\xi\in S_n}R|\xi_n-\xi|\lesssim R^{-d/p}.
\end{align*}
Combining this with \eqref{decay} it follows that
$$\int_{B(0,R)}|\sum_{n=1}^Ne(\xi_n\cdot x)|^pdx\lesssim  N^p(1+\int_{B(0,R)}|\widehat{d\sigma}(x)|^pdx)\lesssim N^p.$$
Assume $\frac1N\le \delta\le 1$.
Consider the $\{0,1\}$-valued independent random variables $f_n(\omega)$ with mean $\delta$, defined on the probability space $\Omega$. We first write for each $x\in B(0,R)$
$$\int_{\Omega}|\sum_{n=1}^Nf_n(\omega)e(\xi_n\cdot x)|^pd\omega\lesssim\delta^p|\sum_{n=1}^Ne(\xi_n\cdot x)|^p+\int_{\Omega}|\sum_{n=1}^N(f_n(\omega)-\delta)e(\xi_n\cdot x)|^pd\omega.$$
For the second term, we use Theorem \ref{tp1} with $X_n(\omega)=(f_n(\omega)-\delta)e(\xi_n\cdot x)$ and then Theorem \ref{tp2} with $q=p/2$ and $Y_n(\omega)=|f_n(\omega)-\delta|^2$
\begin{align*}
\int_{\Omega}|\sum_{n=1}^N(f_n(\omega)-\delta)e(\xi_n\cdot x)|^pd\omega&\lesssim \int_\Omega(\sum_{n=1}^N|f_n(\omega)-\delta|^2)^{p/2}d\omega\\&\lesssim \sum_{n=1}^N\int_\Omega|f_n(\omega)-\delta|^pd\omega+(\sum_{n=1}^N\int_\Omega|f_n(\omega)-\delta|^2d\omega)^{p/2}\\&\lesssim
N(\delta+\delta^p)+(N(\delta+\delta^2))^{p/2}\sim (N\delta)^{p/2}.
\end{align*}

Integrating on $B(0,R)$ we get
$$\frac1{|B(0,R)|}\int_\Omega\int_{B(0,R)}|\sum_{n=1}^Nf_n(\omega)e(\xi_n\cdot x)|^pdxd\omega\lesssim (N\delta)^{p/2}+\frac{(N\delta)^p}{R^d}.$$
We select $\delta$ such that $N\delta\sim R^{2d/p}$. Let
$$\Omega'=\{\omega\in\Omega:\;|\sum_{n=1}^Nf_n(\omega)-N\delta|\ge \frac{N\delta}{2}\}.$$
Then
$$|\Omega'|\le  \frac{4\int_\Omega|\sum_{n=1}^N(f_n(\omega)-\delta)|^2d\omega}{N^2\delta^2}=\frac{4N(\delta+\delta^2)}{N^2\delta^2}\lesssim \frac1{N\delta}.$$
Thus $|\Omega\setminus \Omega'|\ge 1/2$ if $R$ is large enough. We may thus pick $\omega\in\Omega\setminus\Omega'$ such that
$$\frac1{|B(0,R)|}\int_{B(0,R)}|\sum_{n=1}^Nf_n(\omega)e(\xi_n\cdot x)|^pdx\lesssim (N\delta)^{p/2}.$$
It suffices to note that $f_n(\omega)=1$ for $\sim N\delta$ values of $n$. The set $\Pc_R$ will consist precisely of those $\xi_n$ corresponding to these $n$.
\medskip

We close with the proof of the second part in Theorem \ref{ttight}. For each $R=R_k$, consider the probability measure on $E$
$$\mu_k=\frac1{|\Pc_{R_k}|}\sum_{\xi\in\Pc_{R_k}}\delta_{\xi}.$$
Note that
$$\widehat{\mu_k}(0)=1$$
and
$$\int_{B(0,R_k)}|\widehat{\mu_k}(x)|^pdx\lesssim 1.$$
Consider a weak limit point $d\sigma$ of the sequence. Since $\widehat{\mu_k}(x)\to \widehat{d\sigma}(x)$ for each $x$, we find that  $\sigma$ is a probability measure. Also, the dominated convergence theorem shows that for each ball $B(0,r)$
$$\int_{B(0,r)}|\widehat{d\sigma}(x)|^pdx=\lim_{k\to\infty}\int_{B(0,r)}|\widehat{\mu_k}(x)|^pdx\le\lim_{k\to\infty}\int_{B(0,R_k)}|\widehat{\mu_k}(x)|^pdx\lesssim 1.$$

\end{document}